\newtheorem*{main-theorem}{Theorem~\ref{final}}
\newtheorem*{main-lemma}{Theorem~\ref{bound}}
\newtheorem{proposition}{Proposition}[section]
\newtheorem{corollary}[proposition]{Corollary}
\newtheorem{lemma}[proposition]{Lemma}
\newtheorem{theorem}[proposition]{Theorem}
\theoremstyle{remark}
\newtheorem{remark}[proposition]{Remark}
\newtheorem*{acknowledgements}{Acknowledgements}
\newcommand{\R}{\mathbb{R}}
\newcommand{\C}{\mathbb{C}}
\newcommand{\dd}{\mathrm{d}}
\renewcommand{\Re}{\mathrm{Re}\,}
\renewcommand{\Im}{\mathrm{Im}\,}
\renewcommand{\le}{\leqslant}
\renewcommand{\ge}{\geqslant}
\renewcommand{\leq}{\leqslant}
\renewcommand{\geq}{\geqslant}
\DeclareMathOperator*{\supp}{supp} 
\author{Pedro Caro}
\address{BCAM - Basque Center for Applied Mathematics}
\email{pcaro@bcamath.org}
\author{Mar\'ia \'Angeles Garc\'ia-Ferrero}
\address{Universitat de Barcelona}
\email{garciaferrero@ub.edu}
\author{Keith M. Rogers}
\address{Instituto de Ciencias Matem\'aticas CSIC-UAM-UC3M-UCM}
\email{keith.rogers@icmat.es}
\date{\today}
\keywords{Calder\'on inverse problem, conductivity, reconstruction, low regularity.}
\thanks{Partially supported by Ikerbasque and BERC  2022-2025, the MICINN grants PID2021-125021NAI00, PID2021-122154NB-I00, PID2021-122156NB-I00 and PID2021-124195NB-C33,  the Severo Ochoa grants CEX2019-000904-S and CEX2021-001142-S, and the ERC grant AdG-834728.
}
\title[Reconstruction with Lipschitz conductivities]{Reconstruction for the Calder\'on problem with Lipschitz conductivities}
\begin{document}

\begin{abstract} 
We determine the conductivity of the interior of a body using electrical measurements on its surface. We assume only that the conductivity is bounded below by a positive constant and that the conductivity and surface are Lipschitz continuous. To determine the conductivity we first solve an associated integral equation locally, finding solutions in $H^1(B)$, where $B$ is a ball that properly contains the body. A key ingredient is to equip this Sobolev space with an equivalent norm which depends on two auxiliary parameters that can be chosen to yield a contraction.
\end{abstract}

\maketitle

\vspace{1.5cm}

\section{Introduction}

We consider the conductivity equation in a bounded domain $\Omega\subset \mathbb{R}^n$ and place electric potentials $\phi\in H^{1/2}(\partial\Omega)$ on the Lipschitz boundary $\partial\Omega$;
\begin{equation}
	\left\{
		\begin{aligned}
		\nabla \cdot (\sigma \nabla u)  &= 0&   \text{in}&\ \Omega , \\
		u|_{\partial\Omega}&= \phi.&  
		\end{aligned}
	\right.
	\label{pb:BVP}
\end{equation}
Throughout the article, the conductivity $\sigma$ is assumed to be bounded above and below by positive constants so that
\eqref{pb:BVP} has a unique weak solution $u$ in the $L^2$-Sobolev space $H^1(\Omega)$. The Dirichlet-to-Neumann map~$\Lambda_\sigma$ can then be formally defined  by
\begin{equation}\label{dn} \Lambda_\sigma :  \phi \mapsto \sigma \partial_\nu u|_{\partial \Omega},\end{equation}
where $\nu$ denotes the outward unit 
normal vector to $\partial \Omega$. This provides us with the steady-state perpendicular currents induced by the electric potentials~$\phi$.  

Motivated by the possibility of creating an image of the interior of a body from these noninvasive voltage-to-current measurements on its surface, Calder\'on asked~\cite{zbMATH05684831}  whether the conductivity $\sigma$ is uniquely determined by $\Lambda_{\sigma}$ and, if so,  whether  $\sigma$ can be calculated from~$\Lambda_{\sigma}$.  
In two dimensions, Astala and P\"aiv\"arinta answered the uniqueness part in \cite{zbMATH05050053}, as well as providing a reconstruction algorithm in \cite{zbMATH05077070}. The two-dimensional problem has distinct mathematical characteristics, so from now on we consider only $n\ge 3$.

With $n\ge 3$, it has so far been necessary  to make additional  regularity assumptions. In the eighties, Kohn and Vogelius \cite{zbMATH03939884} proved uniqueness for real-analytic conductivities and Sylvester and Uhlmann \cite{zbMATH04015323} improved this to smooth conductivities.  Nachman, Sylvester and Uhlmann \cite{zbMATH04050176} then proved uniqueness for twice continuously differentiable conductivities and Nachman~\cite{zbMATH04105476} and Novikov~\cite{zbMATH04129351} provided reconstruction algorithms. These pioneering articles provoked a great deal of interesting work, including that of Brown \cite{zbMATH00912089}, P\"aiv\"arinta--Panchenko--Uhlmann~\cite{zbMATH02005267} and Brown--Torres~\cite{zbMATH02102106} for conductivities with $3/2$ derivatives. In the past decade, a breakthrough was made by Haberman and Tataru \cite{zbMATH06145493}, who proved uniqueness for continuously differentiable conductivities or Lipschitz conductivities with $\|\nabla \log\sigma\|_\infty$ sufficiently small. Garc\'ia and Zhang~\cite{zbMATH06659335} then provided a reconstruction algorithm under the same assumptions. Two of the authors~\cite{zbMATH06534426} removed the smallness condition from the uniqueness result and the purpose of this article will be to extend this work to a reconstruction algorithm that holds for all Lipschitz conductivities. We will not assume that the conductivity is constant near the boundary, nor will we extend the conductivity in order to achieve this, 
 leading to simpler formulas than those of~\cite{zbMATH06659335}; see Section~\ref{recon}.

Before we outline the proof, we remark that there are also uniqueness results for conductivities in Sobolev spaces; see \cite{zbMATH06490961,  zbMATH07373390, zbMATH07395052}. In particular, Haberman~\cite{zbMATH06490961} proved that uniqueness holds for bounded conductivities in $W^{1,n}(\overline{\Omega})$, with $n=3$ or~$4$. Note that this is a strictly larger class than Lipschitz, however there are obstacles to reconstruction via their methods; see Remark~\ref{dif} for more details. It has been conjectured that Lipschitz continuity is the sharp threshold within the scale of H\"older continuity; see for example \cite{zbMATH00912089} or \cite[Open Problem 1]{UhlmannICM}.

When $\sigma$ is Lipschitz,  weak solutions to~\eqref{pb:BVP} are in fact strong solutions; see for example \cite[Theorem 1.3]{zbMATH06012150}. 
 Defining the Dirichlet-to-Neumann map as in \eqref{dn} by identifying $\sigma \partial_\nu u|_{\partial \Omega}$ with the normal trace of   $\sigma \nabla u$, we have the divergence identity
\begin{equation*}\label{div}
\int_{\partial\Omega} \Lambda_\sigma[\phi] \psi =\int_{\Omega} \sigma \nabla u\cdot \nabla \psi
\end{equation*}
whenever $(\phi,\psi)\in H^{1/2}(\partial\Omega)\times H^{1}(\Omega)$; see for example \cite[Proposition 2.4]{zbMATH07578602}. Given this identity, it is possible to describe the  heuristic which underlies the reconstruction: For each $\xi\in \mathbb{R}^n$, one hopes to choose an oscillating pair $(\phi,\psi)$  so that the right-hand side becomes a nonlinear Fourier transform of $\sigma$ evaluated at~$\xi$. As the left-hand side can be calculated from the measurements, the conductivity might then be recoverable by Fourier inversion. Indeed, much of the literature, including the original work of Calder\'on~\cite{zbMATH05684831}, has involved pairs $(e^{\rho \cdot {\rm x}},e^{\rho' \cdot {\rm x}})$ with  $\rho,\rho'\in \mathbb{C}^n$ chosen carefully so that $\rho+\rho'$ is equal to a real constant multiple of $-i\xi$, where $i:=\sqrt{-1}$. The hope is that  the essentially harmonic $u$ is not so different from $e_\rho:=e^{\rho \cdot {\rm x}}$, and so the complex vector~$\rho$ is chosen in such a way that  $\rho\cdot\rho=0$ so that $e_\rho$ is  harmonic.

In fact we begin by noting that $u$ is a solution to the conductivity equation if and only if $v=\sigma^{1/2} u$ is a solution to the Schr\"odinger equation
\begin{equation}\label{schrod}
\Delta v= q v {\quad}  \text{in}\ \Omega,
\end{equation}
where formally $q = \sigma^{-1/2} \Delta \sigma^{1/2}$. Kohn and Vogelius \cite{zbMATH03957806} observed that if~$\sigma|_{\partial\Omega}$ and $\nu \cdot \nabla \sigma|_{\partial\Omega}$ are known, then the Dirichlet-to-Neumann map~$\Lambda_q$ for the Schr\"odinger equation  \eqref{schrod} can be written in terms of~$\Lambda_\gamma$, and so the literature has mainly considered the essentially equivalent problem of recovering $q$ from~$\Lambda_q$ (which is intimately connected to inverse scattering at fixed energy). We will only partially use the equivalence however: we will recover $q$ directly from~$\Lambda_\gamma$, circumventing the need to calculate $\nu \cdot \nabla \sigma|_{\partial\Omega}$. This is connected to the fact that our  conductivities are not regular enough to define~$q$ in a pointwise fashion. However, as noted by Brown~\cite{zbMATH00912089}, 
it suffices to define $\big\langle qv, \psi \big\rangle:=\big\langle q, v\psi \big\rangle$ for suitable test functions~$\psi$, with
\begin{align}\label{qog}
\big\langle q,\, \centerdot\, \big\rangle &:= - \int_{\Omega} \nabla \sigma^{1/2} \cdot \nabla (\sigma^{-1/2}\,\centerdot\,). \end{align}
By the product rule and the Cauchy--Schwarz inequality, $\langle q, \, \centerdot\, \rangle$ and $\langle qv, \, \centerdot\, \rangle$ are bounded linear functionals on $H^1(B)$, where $B$ is a ball that properly contains~$\Omega$, so in particular we can make sense of $q$ and~$qv$ as distributions.

Rather than solving \eqref{schrod} directly, we consider solutions to the Lippmann--Schwinger-type equation 
\begin{equation}\label{ls0} v=\Delta^{\!-1}\circ {\rm M}_q[v]+e_{\rho},
\end{equation}
where ${\rm M}_q:f\mapsto qf$ and the inverse of the Laplacian is defined using the Faddeev fundamental solution; see Section~\ref{fadd}.
Integral equations like this are usually solved globally, however we will solve this only locally, finding solutions $v\in H^1(B)$. 
Writing $
v=e_{\rho}(1 + w)$
and additionally requiring that the remainders~$w$ vanish in some sense as~$|\rho|\to \infty$ gives hope that the nonlinear Fourier transform will converge to the linear Fourier transform in the limit. Solutions of this type were introduced to the problem by Sylvester and Uhlmann~\cite{zbMATH04015323} and have since become known as CGO solutions, where CGO stands for Complex Geometrical Optics. Substituting  into~\eqref{schrod} and multiplying  by~$e_{-\rho}$, we find that 
\begin{equation}\label{rem}
\Delta_{\rho} w= {\rm M}_q[1+w] {\quad}  \text{in}\ \Omega,
\end{equation}
where $\Delta_\rho:=\Delta+2\rho\cdot\nabla$. In much of the  literature $\Delta_\rho$ is inverted using the Fourier transform and the resulting integral equation 
is solved globally via a contraction for $\Delta^{\!-1}_\rho\circ {\rm M}_q$ and Neumann series. In order to reconstruct~$\sigma$ from~$\Lambda_\sigma$ (as opposed to just proving uniqueness),  we must additionally determine which electric potentials should be placed on the boundary in order to generate the CGO solutions. A contraction for $\Delta^{\!-1}_\rho\circ {\rm M}_q$ can also be helpful in this step, however, the need for such a contraction was circumvented in the uniqueness result of~\cite{zbMATH06534426}, instead  solving the differential equation~\eqref{rem} via the method of {\it a priori} estimates.  

Nachman and Street were able to recover the boundary values of CGO solutions that had been constructed via {\it a priori} estimates~\cite{zbMATH05690567}, however we were unable to take advantage of their ideas; see Remark~\ref{street} for more details. Instead we will reprove the existence of CGO solutions, this time via Neumann series, however we will adopt the previously mentioned intermediate approach of solving the integral equation only locally. That is to say, we find a $w\in H^1(B)$ such that
\begin{equation}\label{int}
\big(\mathrm{I}-\Delta^{\!-1}_\rho\circ {\rm M}_q\big)w=\Delta^{\!-1}_\rho\circ {\rm M}_q[1],
\end{equation}
where the identity holds as elements of $H^1(B)$. This is equivalent to \eqref{ls0} when $\Delta^{\!-1}\circ {\rm M}_q$ is defined appropriately; see Remark~\ref{9.2}.

Most of the article will be occupied by the proof of the contraction for  $\Delta^{\!-1}_\rho\circ {\rm M}_q$ in Sections~\ref{sketch}-\ref{contraction}. In Section~\ref{sketch} we give a sketch of its proof before proving the key Carleman estimate in Section~\ref{sec:Tzeta}. In Section~\ref{spaces} we incorporate the associated convex weights into our localised versions of the Haberman--Tataru norms,  so that they not only depend on~$\rho$ but also on an auxiliary parameter $\lambda>1$. The final estimate for $\Delta^{\!-1}_\rho$, proved in Section~\ref{trup}, is somewhat weaker and easier to prove than the main estimate of~\cite{zbMATH06534426}, so the present article also simplifies the uniqueness result of~\cite{zbMATH06534426}. In  Section~\ref{third} we bound~${\rm M}_q$ with respect to the new norms and in Section~\ref{contraction} we choose the parameters in order to yield the contraction. 

In the following Section~\ref{fad} we list some of the main definitions before presenting the reconstruction algorithm in Section~\ref{recon}. The reconstruction formulas will not make mention of the new norms which are only used  in Section~\ref{finalsection} to prove the validity of the formulas. In the final Section~\ref{pract} we suggest some simplifications that could make the algorithm easier to implement.

\begin{acknowledgements} The third author thanks Kari Astala, Daniel Faraco, Peter Rogers and Jorge Tejero for helpful conversations.
\end{acknowledgements}

 \section{Preliminary notation}\label{fad}

 We invert our main operator~$\Delta_{\rho}$ initially on the space of Schwartz functions~$\mathcal{S}(\R^n)$, using the Fourier transform defined by 
$$\widehat{f}(\xi) :=  \int_{\R^n} e^{-i \xi \cdot x} f(x) \, \dd x$$
for all $\xi\in \mathbb{R}^n$  and  $f\in \mathcal{S}(\R^n)$.
 By integration by parts, one can calculate that
\begin{equation}
\label{id:symbols}
\widehat{\Delta_\rho f}(\xi)=m_\rho(\xi)\widehat{f}(\xi),\qquad \text{where}\quad m_\rho (\xi) := -|\xi|^2 + 2i \rho \cdot \xi,
\end{equation}
for all $\xi\in \mathbb{R}^n$. The reciprocal of this Fourier multiplier is integrable on compact sets, so we can define an inverse by 
\begin{equation*}\label{deft}
\Delta^{\!-1}_{\rho} g (x) := \frac{1}{(2 \pi)^{n}} \int_{\R^n} e^{i x \cdot \xi} \frac{1}{m_\rho (\xi)} \widehat{g}(\xi) \, \dd \xi
\end{equation*}
for all $x\in \mathbb{R}^n$ and $g\in \mathcal{S}(\R^n)$.

\subsection{The Faddeev fundamental solutions:}\label{fadd}   Writing the inverse Fourier transform of the product  as a convolution,  we find
\begin{equation}\label{loveit}
\Delta^{\!-1}_{\rho} g (x) = \int_{\R^n} F_{\!\rho}(x-y) g(y) \, \dd y\end{equation}
for all $x\in \mathbb{R}^n$ and $g\in \mathcal{S}(\R^n)$, where the fundamental solution $F_{\!\rho}$ for $\Delta_\rho$ is defined by
$$
F_{\!\rho}(x)
:=\lim_{r \to \infty} \frac{1}{(2 \pi)^n} \int_{\R^n}  e^{i x \cdot \xi}\frac{1}{m_\rho (\xi)} \widehat{\chi}(\xi/r) \, \dd \xi.
$$ 
Here $\chi\in  \mathcal{S}(\R^n)$ must be positive and satisfy $\widehat{\chi}(0)=1$, but the limit is insensitive to the precise choice of $\chi$ and so the integral is often  written formally, taking $\widehat{\chi}=1$. 
This fundamental solution was first considered by Faddeev in the context of quantum inverse scattering~\cite{zbMATH03237162}.

We also consider the associated fundamental solution $G_{\!\rho}:=e_{\rho}F_{\!\rho}$ for the Laplacian  and we will often write $
G_{\!\rho}({\rm x},{\rm y}):=G_{\!\rho}({\rm x}-{\rm y})$. This is not so different from the usual potential-theoretic fundamental solution. Indeed, by subtracting one from the other, one obtains a harmonic function which is thus smooth,
by Weyl's lemma; \begin{equation}\label{bigpot}
H_{\!\rho}({\rm x}):=G_{\!\rho}({\rm x})-\frac{c_n}{(2-n)}\frac{1}{|{\rm x}|^{n-2}},
\end{equation}
where $c_n$ denotes the reciprocal of the measure of the unit sphere.  For more details regarding the properties of Faddeev's fundamental solutions, see \cite[Section~6.1]{zbMATH00194126}.

\subsection{The boundary integral:} For notational compactness we  write the reconstruction formulas in terms of the bilinear functional $BI_{\Lambda_\sigma}:H^{1/2}(\partial \Omega)\times H^{1}(\Omega) \to \mathbb{C}$ defined by
\begin{equation}\label{dn2} 
BI_{\Lambda_\sigma}(\phi,\psi):=\int_{\partial\Omega} \Big(\sigma^{-1/2}\Lambda_\sigma[\sigma^{-1/2}\phi]-\nu\cdot \nabla P_0[\phi]\Big) \psi
\end{equation}
where $P_0[\phi]$ denotes the harmonic extension of $\phi$. 
 Brown~\cite{zbMATH01731190} 
 calculated $\sigma|_{\partial \Omega}$  from~$\Lambda_\sigma$, 
so  the boundary integral $BI_{\Lambda_\sigma}$ can be recovered from~$\Lambda_\sigma$. 
 In Lemma~\ref{ale} we will prove  that $$BI_{\Lambda_\sigma}\big(\phi,G_{\!\rho}({\rm x},\,\centerdot\,)\big) \in H^{1}(B\setminus \overline{\Omega}),$$ 
where $B$ properly contains $\Omega$. 
This allows us to define $\Gamma_{\!\!\Lambda_\sigma}:H^{1/2}(\partial \Omega)\to H^{1/2}(\partial \Omega)$ by taking the outer trace on $\partial\Omega$; \begin{equation}\label{gam}
\Gamma_{\!\!\Lambda_\sigma}[\phi]:=BI_{\Lambda_\sigma}\big(\phi,G_{\!\rho}({\rm x},\,\centerdot\,)\big)|_{\partial\Omega}.
\end{equation}
As $H_{\!\rho}$ is smooth, the singularity of $G_{\!\rho}$ is the same as that of the usual potential-theoretic fundamental solution, so $\Gamma_{\!\!\Lambda_\sigma}$ shares many properties with the single layer potential; see for example \cite[Propositions~3.8 and 7.9]{zbMATH01286366}. However,  we will not need these type of estimates going forward.

\section{The reconstruction algorithm}\label{recon}

Recall our {\it a priori} assumptions, that the boundary and conductivity are Lipschitz continuous and that the conductivity is bounded below by a positive constant.

The first step of the reconstruction algorithm is to determine the electric potentials that we place on the boundary in order to generate the CGO solutions. As in the previous reconstruction formulas of \cite{zbMATH04105476, zbMATH04129351, zbMATH06659335}, we resort to the Fredholm alternative, however, once we have obtained the contraction, the argument will be direct, avoiding the use of generalised double layer potentials. The proof is postponed until Section~\ref{finalsection}.

\begin{theorem}\label{bound}\sl Consider $\rho \in \C^n$ such that  $\rho\cdot\rho=0$ and $|\rho|^2=\rho\cdot \overline{\rho}$ is sufficiently large. Let  $\Gamma_{\!\!\Lambda_\sigma}$ be defined by \eqref{gam}. Then 
\vspace{0.3em}\begin{itemize}
\item [{(i)}] $\Gamma_{\!\!\Lambda_\sigma}:H^{1/2}(\partial \Omega)\to H^{1/2}(\partial \Omega)$ is bounded compactly,
\item [{(ii)}] if $\Gamma_{\!\!\Lambda_\sigma}[\phi]=\phi$, then $ \phi=0$,
\item [{(iii)}] $\mathrm{I}-\Gamma_{\!\!\Lambda_\sigma}$ has a bounded inverse on $H^{1/2}(\partial \Omega)$,
\end{itemize}
\vspace{0.2em}
and if $v=e_{\rho}(1+w)$, where $w\in H^1(B)$ is a solution to 
 \eqref{int},  then
\vspace{0.2em}\begin{itemize}
\item [{(iv)}]  
 $v|_{\partial \Omega}= (\mathrm{I}-\Gamma_{\!\!\Lambda_\sigma})^{-1}[e_{\rho}|_{\partial \Omega}].
$
\end{itemize}
\end{theorem}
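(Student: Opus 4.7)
The plan is first to establish a Green-type identity that ties $BI_{\Lambda_\sigma}$ to the interior pairing,
\begin{equation*}
BI_{\Lambda_\sigma}\bigl(\phi,G_{\!\rho}(x,\centerdot)\bigr)=\langle qv,G_{\!\rho}(x,\centerdot)\rangle,\qquad x\in B\setminus\overline{\Omega},
\end{equation*}
valid whenever $u$ is the weak solution of the conductivity equation in $\Omega$ with boundary value $\sigma^{-1/2}\phi$ and $v:=\sigma^{1/2}u$ (so that $v|_{\partial\Omega}=\phi$). I would prove this by starting from the definition \eqref{qog}, expanding $\nabla\sigma^{1/2}\cdot\nabla(\sigma^{-1/2}vG_{\!\rho})$, and integrating by parts using the conductivity equation in the form $\sigma^{1/2}\Delta u=-2\nabla\sigma^{1/2}\cdot\nabla u$; what remains are the surface integrals $\int_{\partial\Omega}\sigma^{1/2}\partial_\nu u\,G_{\!\rho}-\int_{\partial\Omega}v\partial_\nu G_{\!\rho}$, which reproduce $BI_{\Lambda_\sigma}$ after converting the second one via Green's identity applied to the pair $(P_0[\phi],G_{\!\rho}(x,\centerdot))$, both harmonic in $\Omega$ because $x\notin\overline{\Omega}$.

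Part (iv) is then essentially formal. Since $\rho\cdot\rho=0$ one has the intertwining $\Delta^{\!-1}f=e_{\rho}\Delta^{\!-1}_\rho(e_{-\rho}f)$, which turns \eqref{int} into the Lippmann--Schwinger identity $v-e_{\rho}=\Delta^{\!-1}M_q[v]$ in $H^1(B)$; evaluating at $x\in B\setminus\overline{\Omega}$ and invoking the Green-type identity gives $v(x)-e_{\rho}(x)=BI_{\Lambda_\sigma}(v|_{\partial\Omega},G_{\!\rho}(x,\centerdot))$, whose outer trace on $\partial\Omega$ reads $(\mathrm{I}-\Gamma_{\!\!\Lambda_\sigma})[v|_{\partial\Omega}]=e_{\rho}|_{\partial\Omega}$, so (iv) is immediate from (iii). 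For (ii), suppose $\Gamma_{\!\!\Lambda_\sigma}[\phi]=\phi$. Glue $v:=\sigma^{1/2}u$ in $\Omega$ (with $u$ solving the conductivity equation and $u|_{\partial\Omega}=\sigma^{-1/2}\phi$) to $v(x):=BI_{\Lambda_\sigma}(\phi,G_{\!\rho}(x,\centerdot))$ on $B\setminus\overline{\Omega}$; by Lemma~\ref{ale} the outer piece lies in $H^1(B\setminus\overline{\Omega})$ and the assumption makes both traces on $\partial\Omega$ equal to $\phi$, hence $v\in H^1(B)$. The Green-type identity gives $v=\Delta^{\!-1}M_q[v]$ on $B\setminus\overline{\Omega}$; since both sides lie in $H^1(B)$ and have Laplacian equal to $qv$ on $\Omega$, their difference restricted to $\Omega$ is a harmonic $H^1_0(\Omega)$-function, hence vanishes. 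So $v=\Delta^{\!-1}M_q[v]$ in $H^1(B)$, which the intertwining converts into $(\mathrm{I}-\Delta^{\!-1}_\rho\circ M_q)(e_{-\rho}v)=0$; the contraction from Section~\ref{contraction} makes $\mathrm{I}-\Delta^{\!-1}_\rho\circ M_q$ injective, so $e_{-\rho}v=0$ and $\phi=v|_{\partial\Omega}=0$.

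For (i), the same identity represents $\Gamma_{\!\!\Lambda_\sigma}[\phi]$ as the outer trace on $\partial\Omega$ of a Faddeev-type volume potential whose densities (paired with $G_{\!\rho}$ and with $\nabla G_{\!\rho}$) are $L^2(\Omega)$-functions depending linearly and boundedly on $\phi\in H^{1/2}(\partial\Omega)$, thanks to $\nabla\sigma^{1/2}\in L^\infty$. Standard potential theory together with the smooth contribution coming from $H_{\!\rho}$ in \eqref{bigpot} then places this potential in a Sobolev space up to $\partial\Omega$ from outside whose trace embeds compactly into $H^{1/2}(\partial\Omega)$. Part (iii) is then the Fredholm alternative on the Hilbert space $H^{1/2}(\partial\Omega)$ applied to (i) and (ii). The principal obstacle throughout is the distributional nature of $q$: every manipulation involving $qv$ has to be carried out at the level of the pairing \eqref{qog}, so that only $L^2$ quantities involving $\nabla\sigma^{1/2}$ ever appear---which is precisely where the Lipschitz regularity of $\sigma$ is essentially used.
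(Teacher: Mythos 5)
Your overall route matches the paper's: establish the Alessandrini-type identity with the Faddeev kernel (Lemma~\ref{ale}), represent $\Gamma_{\!\!\Lambda_\sigma}$ as the outer trace of a Faddeev volume potential applied to $P_q[\phi]$, derive injectivity (ii) from the contraction for $\Delta^{\!-1}_\rho\circ{\rm M}_q$, apply the Fredholm alternative for (iii), and deduce (iv) from the Lippmann--Schwinger form of \eqref{int}. Your parts (ii), (iii) and (iv) are sound and in substance the same as the paper's (the paper states (ii) in terms of ${\rm S}_q\circ P_q$ rather than a gluing, but the content is identical).

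There is, however, a genuine gap in your argument for (i). You claim the volume potential lands in a Sobolev space whose trace embeds compactly into $H^{1/2}(\partial\Omega)$. That is correct for the two pieces in which the $L^2(\Omega)$ density is paired with $G_{\!\rho}$ itself (the paper's ${\rm S}_1$, ${\rm S}_2$, with densities $|\nabla\log\sigma|^2 v$ and $\nabla\log\sigma\cdot\nabla v$): those potentials lie in $H^2(B\setminus\overline{\Omega})$, whose trace space $H^{3/2}(\partial\Omega)$ does embed compactly in $H^{1/2}(\partial\Omega)$. But it fails for the piece paired with $\nabla G_{\!\rho}$ (the paper's ${\rm S}_3$, with density $v\,\nabla\log\sigma\in L^2(\Omega)$): convolving an $L^2(\Omega)$ density against $\nabla G_{\!\rho}$ yields only an $H^1(B\setminus\overline{\Omega})$ function, whose trace lies merely in $H^{1/2}(\partial\Omega)$, and $H^{1/2}(\partial\Omega)$ does not embed compactly into itself. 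For that term the compactness cannot be extracted from the target regularity; it must come from the source, as the paper does: since no derivative falls on $v$ in this piece, the density depends on $\phi$ through the chain $H^{1/2}(\partial\Omega)\to H^1(\Omega)\hookrightarrow L^2(\Omega)$, where the last (Rellich) embedding is compact. With this correction for the $\nabla G_{\!\rho}$ piece, your argument for (i) closes.
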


Next we provide a formula for the Fourier transform $
\widehat{q}({\rm \xi}):=\langle q, e^{-i {\rm \xi}\cdot {\rm x}}\rangle
$ where $q$ is defined in \eqref{qog}. Again we postpone the proof until the penultimate section.

\begin{theorem}\label{final}  \sl
Let $ \Pi$ be a two-dimensional linear subspace orthogonal to $ \xi \in \R^n$ and define
$$
S^1:= \Pi \cap \big\{\,\theta \in \R^n : |\theta| = 1\, \big\}.$$
 For  $ \theta \in S^1$,
 let $ \vartheta \in S^1$ be such that $ \{\theta,\vartheta \} $ is an orthonormal basis of~$ \Pi $ and define
 \begin{align*}
\rho := \tau \theta + i\Big(-\frac{\xi}{2} + \Big( \tau^2 - \frac{|\xi|^2}{4} \Big)^{1/2} \vartheta \Big),\quad \rho' := -\tau \theta + i\Big(-\frac{\xi}{2} - \Big( \tau^2 - \frac{|\xi|^2}{4} \Big)^{1/2} \vartheta \Big),
\end{align*}
where $\tau>1$. Let $BI_{\Lambda_\sigma}$  and  $\Gamma_{\!\!\Lambda_\sigma}$ be defined by \eqref{dn2} and \eqref{gam}, respectively. Then
\begin{align*}\label{qform}
\widehat{q}(\xi)=
 \lim_{T\to \infty}\frac{1}{2\pi T} \int_{T}^{2T}\!\! \int_{S^1} BI_{\Lambda_\sigma}\Big((\mathrm{I}-\Gamma_{\!\!\Lambda_\sigma})^{-1}[e_{\rho}|_{\partial \Omega}], e_{\rho'}\Big)   \,  \dd \theta \dd \tau.
\end{align*}
\end{theorem}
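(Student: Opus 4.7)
The plan is to combine Theorem~\ref{bound}(iv) with a Green-type identity and a Haberman--Tataru averaging argument to recover $\widehat{q}(\xi)$ from the boundary integral.

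First, by Theorem~\ref{bound}(iv), the Dirichlet datum $(\mathrm{I}-\Gamma_{\!\!\Lambda_\sigma})^{-1}[e_\rho|_{\partial\Omega}]$ is the trace $v|_{\partial\Omega}$ of the CGO solution $v = e_\rho(1+w)$, where $w\in H^1(B)$ solves \eqref{int}. The next step is to establish the Green-type identity
\[
BI_{\Lambda_\sigma}(v|_{\partial\Omega}, e_{\rho'}) = \langle q, v e_{\rho'} \rangle,
\]
which I would prove by unfolding the definitions \eqref{dn2} and \eqref{qog}, writing $v = \sigma^{1/2}u$ so that $\sigma^{1/2}\partial_\nu u = \partial_\nu v - u\,\partial_\nu \sigma^{1/2}$, and invoking Green's second identity both for the harmonic function $e_{\rho'}$ in $\Omega$ and for the harmonic extension $P_0[v|_{\partial\Omega}]$. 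The subtraction of $\nu\cdot \nabla P_0[\phi]$ in \eqref{dn2} is exactly what is needed to convert the one-sided boundary pairing into the interior duality against $q$ furnished by \eqref{qog}.

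Since $\rho+\rho' = -i\xi$ and $\rho\cdot\rho = \rho'\cdot\rho' = 0$, we have $v e_{\rho'} = e^{-i\xi\cdot x}(1+w)$, and therefore
\[
BI_{\Lambda_\sigma}(v|_{\partial\Omega}, e_{\rho'}) = \widehat{q}(\xi) + \langle q, e^{-i\xi\cdot x} w \rangle,
\]
so the theorem reduces to showing that the average of the remainder $\langle q, e^{-i\xi\cdot x} w \rangle$ over $\theta\in S^1$ and $\tau\in [T,2T]$ tends to zero as $T\to\infty$. This is the technical heart of the argument. The contraction for $\Delta^{\!-1}_\rho\circ \mathrm{M}_q$ obtained in Section~\ref{contraction} expresses $w$ as a convergent Neumann series in the localised Haberman--Tataru norm of Section~\ref{spaces}, with a quantitative bound depending on $\rho$ and the auxiliary parameter $\lambda$. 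Writing the remainder through duality (using the adjoint $\Delta^{\!-1}_{-\rho}$ acting on $e^{-i\xi\cdot x}$) and inserting this bound majorises it by a product of two weighted norms of $\mathrm{M}_q[1]$ evaluated at the two ends $\rho$ and $-\rho'$ of the pairing, reducing matters to an averaged bound of the form
\[
\frac{1}{T}\int_T^{2T}\!\int_{S^1} \|\mathrm{M}_q[1]\|^2 \, \dd\theta\, \dd\tau \longrightarrow 0
\]
in the appropriate weighted localised norm from Section~\ref{spaces}. The main obstacle I anticipate is the careful bookkeeping of the $\rho$- and $\lambda$-dependent weights on the two sides of the duality pairing, together with verifying that the $\tau$-averaging genuinely dominates the growth forced by the low (Lipschitz-only) regularity of $\sigma$; this delicate balance is the very reason for the new norms introduced in Section~\ref{spaces}.
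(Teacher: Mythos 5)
Your overall strategy matches the paper's: invoke Theorem~\ref{bound}(iv) to rewrite the boundary datum, use the Alessandrini-type identity (the first identity in Lemma~\ref{ale}) to express $BI_{\Lambda_\sigma}(v|_{\partial\Omega},e_{\rho'})$ as $\langle qv,e_{\rho'}\rangle=\widehat q(\xi)+\langle qw,e^{-i\xi\cdot x}\rangle$, and then show the remainder averages to zero via Haberman--Tataru.

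The step you describe most vaguely, and where your proposal drifts from what actually works, is the remainder estimate. You propose routing it through the adjoint $\Delta^{\!-1}_{-\rho}$ acting on $e^{-i\xi\cdot x}$, bounding by weighted norms of ${\rm M}_q[1]$ at ``the two ends $\rho$ and $-\rho'$'', and then averaging ``in the appropriate weighted localised norm from Section~\ref{spaces}''. That last phrase signals a real gap: the Haberman--Tataru averaging lemma (\cite[Lemma 3.1]{zbMATH06145493}) controls $\frac{1}{T}\int_T^{2T}\!\int_{S^1}\|q\|^2_{\dot X^{-1/2}_\rho}\,\dd\theta\,\dd\tau$ in the \emph{original, unweighted} norm, and there is no analogue for the $\lambda$-weighted norms; indeed the factors $e^{\lambda R^2/2}$ in $Y^{-1/2}_{\lambda,\rho,{\rm c}}$ do not decay under $\tau$-averaging. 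The paper sidesteps this entirely: since $\lambda$ is \emph{fixed} (at $\lambda=36C_0^2$ once $\|\nabla\log\sigma\|_\infty$ and $R$ are given), the equivalences \eqref{equiv} and \eqref{equivY} let one convert the contraction output \eqref{bounder0} back into the plain estimate \eqref{bounder}, namely $\|w\|_{\dot X^{1/2}_\rho(B)}\le C\|q\|_{\dot X^{-1/2}_\rho}$. The remainder is then bounded directly by duality,
\[
|\langle qw,e^{-i\xi\cdot x}\rangle|\le\|q\|_{\dot X^{-1/2}_\rho}\,\|\chi_B e^{-i\xi\cdot x}\widetilde w\|_{\dot X^{1/2}_\rho}\le C\|q\|_{\dot X^{-1/2}_\rho}\|w\|_{\dot X^{1/2}_\rho(B)}\le C\|q\|^2_{\dot X^{-1/2}_\rho},
\]
so the averaging lemma applies in exactly the form it was proved. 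No adjoint operator, no $-\rho'$, and no averaging of weighted norms is involved. In short: the weights play their role only inside the contraction argument; you must discharge them before invoking the averaging lemma, and this is possible precisely because $\lambda$ is a fixed constant. As you wrote it, the final averaged bound would not follow from anything in the paper or in Haberman--Tataru.
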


Finally, we recover $\sigma$ from $q$ using the approach of Garc\'ia and Zhang \cite{zbMATH06659335}.  By the work of Brown~\cite{zbMATH01731190}  and Plancherel's identity, we can now calculate  the right-hand side of
\begin{equation}\label{cond}
	\left\{
		\begin{aligned}
		\Delta w+|\nabla w|^2&=q\quad  \text{in}\ \Omega , \\
		w|_{\partial\Omega} &= \tfrac{1}{2}\log\sigma|_{\partial\Omega}.
		\end{aligned}
	\right.
\end{equation}
If $w\in H^1(\Omega)$ is the unique bounded solution  to \eqref{cond}, we then   have $$\sigma=e^{2w}\quad  \text{in}\ \Omega.$$  This completes the reconstruction algorithm. 

That  $w=\log \sigma^{1/2}$ solves~\eqref{cond} follows directly by inspection of the definition~\eqref{qog} of~$q$. For uniqueness, note that if $\tilde w$ also solved \eqref{cond}, then $u=w-\tilde w$ would solve  
\begin{equation*}
	\left\{
		\begin{aligned}
		\nabla \cdot (\gamma \nabla u)&=0\quad  \text{in}\ \Omega , \\
		u|_{\partial\Omega} &= 0,
		\end{aligned}
	\right.
\end{equation*}
where $\gamma:=e^{w+\tilde w}$. Then $u=0$ by uniqueness of solutions for elliptic equations; see for example \cite[Corollary 8.2]{zbMATH01554166}. 

\section{Sketch of the proof of the contraction for $\Delta^{\!-1}_\rho\circ {\rm M}_q$}\label{sketch}

 One of the main ideas of Haberman and Tataru \cite{zbMATH06145493} was to extend the domain of~$\Delta^{\!-1}_{\rho}$ using Bourgain-type spaces that are adapted to the problem, instead of the usual Sobolev spaces. With $s=1/2$ or $-1/2$, 
their norms 
 are defined by
\begin{equation*}
\label{map:normXb*}
\| \centerdot \|_{\dot{X}^s_\rho} : f \in \mathcal{S}(\R^n) \mapsto \big\| |m_\rho|^s \widehat{f}\, \big\|_{L^2(\R^n)},
\end{equation*}
where $m_\rho$ is the multiplier defined in \eqref{id:symbols}.  Then $\dot{X}^s_\rho$
is defined to be the Banach completion of $\mathcal{S}(\R^n)$
with respect to this norm. 
 It is immediate from the definitions that
\begin{equation}\label{id} \| \Delta^{\!-1}_{\rho} g \|_{\dot{X}^{1/2}_\rho} \le \| g \|_{\dot{X}^{-1/2}_\rho}
\end{equation}
whenever $g\in \mathcal{S}(\R^n)$, which can be used to continuously extend the operator. For ease of reference we will call  \eqref{id}   {\it the trivial inequality}.

On the other hand, Haberman and Tataru also proved  that ${\rm M}_q:f\mapsto qf$ satisfies
\begin{equation}\label{mult} \| {\rm M}_q f \|_{\dot{X}^{-1/2}_\rho} \le C\|\nabla\log \sigma\|_{\infty}(1+|\rho|^{-1}\|\nabla\log \sigma\|_{\infty})  \|f \|_{\dot{X}^{1/2}_\rho}
\end{equation}
whenever $f\in \dot{X}^{1/2}_\rho$; see \cite[Theorem 2.1]{zbMATH06145493}. Together these inequalities yield a contraction for $\Delta^{\!-1}_{\rho} \circ {\rm M}_q$ whenever $|\rho|>1$ and $\|\nabla\log \sigma\|_{\infty}$ is sufficiently small. In order to remove this smallness condition, we will alter the norms in such a way that the constant of \eqref{mult} can be taken small for any Lipschitz conductivity, while maintaining a version of~\eqref{id}.

 There is a natural  gain for the higher frequencies in \eqref{mult} whereas a gain for the lower frequencies can be engineered in \eqref{id} by introducing convex weights. This was the key observation of \cite{zbMATH06534426}. In order to have a gain for all frequencies, in at least one of the inequalities, we dampen  the higher frequencies relative to the lower frequencies in our main norm (with the lower frequencies dampened relative to the higher frequencies in the dual norm), so that the gain for the lower frequencies in our version of~\eqref{id} is passed through to our version of~\eqref{mult}. 

 We prove the Carleman estimate in Section~\ref{sec:Tzeta}, we define new Banach spaces in Section~\ref{spaces}, and then we extend the domain of $\Delta^{\!-1}_{\rho}$ via density in Section~\ref{trup}. We prove our version of~\eqref{mult} in Section~\ref{third} and then combine the estimates to obtain  the contraction in Section~\ref{contraction}.

\section{Bounds for \texorpdfstring{$\Delta_{\rho}^{\!-1}$ with convex weights}{Tzeta}} \label{sec:Tzeta}

Let $B$ be an open ball centred at the origin, with radius $R:=2\sup_{x\in\Omega}|x|$, so that we comfortably have $\Omega\subset B$.
 The forthcoming constants will invariably depend on this~$R$, but never on the auxiliary parameters $\rho \in \C^n$ or $\lambda>1$. 
 
\subsection{The Carleman estimate:} Here we will deduce our estimate for $\Delta_{\rho}^{\!-1}$ from a Carleman estimate for $\Delta_{\rho}$, before defining  the main spaces and their duals in the following section. We improve upon the estimate
\begin{equation}\label{suboptimal}  |\rho|\| \Delta^{\!-1}_{\rho} f \|_{L^2(B)} \le C\| f \|_{L^2(\R^n)}   \end{equation}
whenever $f \in C^\infty_{\rm c}(B)$, which does not seem strong enough to 
construct CGO solutions for Lipschitz conductivities.
The inequality \eqref{suboptimal} follows by combining 
\begin{equation}\label{eleven}
|\rho|^{1/2} \|g\|_{L^2(B)} \leq C \|g\|_{\dot{X}^{1/2}_\rho}
\end{equation}
whenever $g\in \dot{X}^{1/2}_\rho$,  with the trivial inequality \eqref{id}, and then
\begin{equation}
\label{in:negativeXb}
|\rho|^{1/2}\| f \|_{\dot{X}^{-1/2}_\rho} \leq C\| f \|_{L^2(\R^n)}
\end{equation}
whenever $f \in C^\infty_{\rm c}(B)$. The constants $C>1$ depend only on $R$. Away from the zero set of the Fourier multiplier $m_\rho$ these inequalities are obvious, and the localisation serves to blur out the effect of the zero set; see Lemma~2.2 of~\cite{zbMATH06145493} for the proof. 

 In the following 
lemma we improve the constant in \eqref{suboptimal} by introducing exponential weights that depend on the auxiliary parameter $\lambda>1$. 
The extra gain in terms of~$\lambda$ will be key to construct our CGO solutions for Lipschitz 
conductivities.

\begin{lemma}\label{lem:L2B_boundedness} \sl Consider $\rho\in \mathbb{C}^n$ such that $\rho\cdot\rho= 0$  and write $\theta := \Re \rho / |\Re \rho|$. 
Then
\[ \int_{B}  | \Delta^{\!-1}_{\rho}f(x)|^2e^{\lambda (\theta \cdot x)^2}\dd x \leq \frac{2}{\lambda |\rho|^2} \int_{\R^n} |f(x)|^2e^{\lambda (\theta \cdot x)^2} \dd x \]
whenever  $f \in C^\infty_{\rm c}(\R^n)$ and $\lambda>1$ satisfies $|\rho| \geq 4 \lambda R$.
\end{lemma}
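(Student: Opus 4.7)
The plan is to reformulate the inequality as a Carleman estimate. Set $\phi(x):=\tfrac{\lambda}{2}(\theta\cdot x)^{2}$, so that the weight equals $e^{2\phi}$, and write $v:=e^{\phi}u$ with $u:=\Delta_{\rho}^{-1}f$. The desired inequality is then equivalent to
\[
\|v\|_{L^{2}(B)}^{2}\le \frac{2}{\lambda|\rho|^{2}}\,\|P_{\phi}v\|_{L^{2}(\R^{n})}^{2},\qquad P_{\phi}:=e^{\phi}\Delta_{\rho}e^{-\phi}.
\]
I would first establish this for test functions $v\in C_{c}^{\infty}(\R^{n})$, postponing the passage to $v=e^{\phi}\Delta_{\rho}^{-1}f$ to a final step.

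The hypothesis $\rho\cdot\rho=0$ is the key algebraic fact: it gives $\Delta_{\rho}=e^{-\rho\cdot x}\Delta\, e^{\rho\cdot x}$, so that with $\psi:=\rho\cdot x-\phi$,
\[
P_{\phi}v=\Delta v+2\nabla\psi\cdot\nabla v+\bigl(\nabla\psi\cdot\nabla\psi+\Delta\psi\bigr)v.
\]
Using $\nabla\phi=\lambda(\theta\cdot x)\theta$, $\Delta\phi=\lambda$ and $\rho\cdot\theta=|\rho|/\sqrt{2}$ (because $\Im\rho\perp\theta$), I would decompose $P_{\phi}=S+A$ into its self-adjoint and anti-self-adjoint parts in complex $L^{2}(\R^{n})$. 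Since $[S,A]$ is then self-adjoint, the standard identity
\[
\|P_{\phi}v\|_{L^{2}(\R^{n})}^{2}=\|Sv\|^{2}+\|Av\|^{2}+\langle v,[S,A]v\rangle
\]
implies the basic Carleman lower bound $\|P_{\phi}v\|_{L^{2}(\R^{n})}^{2}\ge\langle v,[S,A]v\rangle$ for every $v\in C_{c}^{\infty}$.

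A direct commutator computation -- in which the cross-terms involving $\Im\rho$ vanish thanks to $\Im\rho\perp\theta$ -- yields
\[
[S,A]=-4\lambda(\theta\cdot\nabla)^{2}+2\lambda\bigl(|\rho|-\sqrt{2}\,\lambda\,\theta\cdot x\bigr)^{2}.
\]
Integration by parts turns the first term's pairing with $v$ into the nonnegative quantity $4\lambda\|\theta\cdot\nabla v\|_{L^{2}}^{2}$. For the second, the hypothesis $|\rho|\ge 4\lambda R$ combined with $|\theta\cdot x|\le R$ on $B$ yields $\sqrt{2}\lambda|\theta\cdot x|\le|\rho|/(2\sqrt{2})<|\rho|/2$, and hence $(|\rho|-\sqrt{2}\lambda\,\theta\cdot x)^{2}\ge|\rho|^{2}/4$ on $B$. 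Putting these two pieces together gives
\[
\langle v,[S,A]v\rangle\ge\frac{\lambda|\rho|^{2}}{2}\|v\|_{L^{2}(B)}^{2},
\]
which is the desired Carleman inequality for $v\in C_{c}^{\infty}$, with exactly the sharp constant $2/(\lambda|\rho|^{2})$.

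The main obstacle is the final passage from $C_{c}^{\infty}$ to $v=e^{\phi}\Delta_{\rho}^{-1}f$. Because $e^{\phi}$ grows along the direction $\theta$, this $v$ is smooth but neither compactly supported nor a priori in $L^{2}(\R^{n})$, and a naive cut-off $\chi_{N}u$ introduces commutator errors whose $e^{\phi}$-weighted norms are not obviously small. I would resolve this by a standard H\"ormander-type duality argument: the test-function Carleman inequality implies that $P_{\phi}\colon C_{c}^{\infty}\to L^{2}(\R^{n})$ has closed range relative to $L^{2}(B)$, so Hahn--Banach produces, for every $g\in L^{2}(\R^{n})$, a weak solution $\tilde v\in L^{2}(B)$ of $P_{\phi}\tilde v=g$ obeying the same quantitative bound. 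Uniqueness of Fourier inverses (via \eqref{suboptimal}) then identifies $\tilde v$ with $e^{\phi}\Delta_{\rho}^{-1}f$ when $g=e^{\phi}f$, completing the proof.
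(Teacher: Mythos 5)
Your Carleman computation is correct and follows the same commutator strategy as the paper. The self-adjoint/anti-self-adjoint splitting, the identity $\|P_\phi v\|^2 = \|Sv\|^2+\|Av\|^2+\langle v,[S,A]v\rangle$, and the commutator
$[S,A]=-4\lambda(\theta\cdot\nabla)^2+4\lambda\big(|\rho|/\sqrt{2}-\lambda\,\theta\cdot x\big)^2$
(equivalently your $2\lambda(|\rho|-\sqrt{2}\lambda\theta\cdot x)^2$) all check out, with the cross-terms involving $\Im\rho$ vanishing because $\Im\rho\perp\theta$; and the bound $(|\rho|-\sqrt{2}\lambda\theta\cdot x)^2\geq|\rho|^2/4$ on $B$ under $|\rho|\geq 4\lambda R$ is right. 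The only presentational difference from the paper is that you carry the complex vector $\rho$ through directly, whereas the paper first passes to the real vector $\Re\rho$ via a modulation operator (using $|\rho|^2=2|\Re\rho|^2$) and then rotates so that $\theta={\rm e}_n$; both yield identical commutator algebra.

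The final passage is where your proposal has a genuine gap. You are right to flag that a naive cut-off is delicate because $e^{\phi}$ grows along $\theta$ while $\Delta_\rho^{-1}f$ only decays polynomially, so the cut-off commutator errors are not obviously controlled in the $e^{\phi}$-weighted norm. However, the H\"ormander duality argument you propose does not close this gap as stated. From the Carleman estimate $\|v\|_{L^2(B)}\leq c^{-1}\|P_\phi v\|_{L^2(\R^n)}$ for $v\in C_c^\infty$, Hahn--Banach produces, for each $h\in L^2(B)$, a $u\in L^2(\R^n)$ with $P_\phi^* u=h$ in $\mathcal{D}'$ and $\|u\|_{L^2(\R^n)}\leq c^{-1}\|h\|_{L^2(B)}$ --- this is an existence result for the \emph{adjoint} operator $P_\phi^*=e^{-\phi}\Delta_{-\bar\rho}\,e^{\phi}$, not a direct solvability statement $P_\phi\tilde v=g$ with $\tilde v\in L^2(B)$ as you wrote. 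Even once the roles are sorted out, to conclude that the abstractly produced solution coincides with $e^{\phi}\Delta_\rho^{-1}f$ you would need a uniqueness statement in a class large enough to contain both, and the appeal to \eqref{suboptimal} does not supply this: the Hahn--Banach solution $u$ lives only in $L^2(\R^n)$, while the Fourier-multiplier inverse is characterised in a Bourgain-type space, and the kernel of $\Delta_\rho$ in $L^2_{\rm loc}$ with sub-Gaussian growth is not trivial. So either a careful mollification/limiting argument along the lines the paper compresses into ``by density'', or a genuine reformulation of the duality step with the adjoint operator and a matching uniqueness lemma, is needed to complete the proof.
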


\begin{proof} If  $m_\rho$ had been defined slightly differently at the beginning, including a superfluous $\rho\cdot\rho$ term, we could have proved a version of this lemma without the hypothesis that $\rho\cdot\rho= 0$. In fact, we begin by reducing to a purely real vector case.  Indeed, letting $\Re \rho,\, \Im \rho\in \mathbb{R}^n$ denote the real and imaginary parts of~$\rho$, respectively, we define $\Delta^{\!-1}_{\Re \rho}$ as in Section~\ref{fad}, but with $m_\rho$ replaced by 
\begin{equation*}
m_{\Re \rho} (\xi): = -|\xi|^2 + 2i \Re \rho \cdot \xi+\Re \rho\cdot \Re \rho
\end{equation*}
for all $\xi\in \mathbb{R}^n$. Then, observing that 
$$
m_\rho(\xi)=-|\xi|^2 + 2i\rho \cdot \xi+\rho\cdot \rho=m_{\Re \rho}(\xi + \Im \rho), 
$$
and defining the modulation operator by ${\rm Mod}_{\Im \rho} f({\rm x}) := e^{i \Im \rho \cdot {\rm x}} f({\rm x})$, we find that
\[ {\rm Mod}_{\Im \rho} [\Delta^{\!-1}_{\rho} f] = \Delta^{\!-1}_{\Re \rho} [ {\rm Mod}_{\Im \rho} f] \]
whenever $f \in C^\infty_{\rm c}(\R^n)$. Recalling that $|\rho|^2=2|\Re \rho|^2$ if $\rho\cdot\rho=0$,  it will therefore suffice to prove 
\[ \int_{B}  | \Delta^{\!-1}_{\Re \rho}f(x)|^2e^{\lambda (\theta \cdot x)^2}\dd x \leq \frac{1}{\lambda |\Re \rho|^2} \int_{\R^n} |f(x)|^2e^{\lambda (\theta \cdot x)^2} \dd x \]
whenever $|\Re \rho|\ge 2 \lambda R$. Recalling that $\theta := \Re \rho / |\Re \rho|$, by  rotating to ${\rm e}_n$ this would follow from
\begin{equation}
\label{in:boundednessTzeta}
\int_{B}  | \Delta^{\!-1}_{\tau {\rm e}_n}f(x)|^2e^{\lambda x_n^2}\,\dd x \leq \frac{1}{\lambda \tau^2} \int_{\R^n} |f(x)|^2e^{\lambda x_n^2}\, \dd x
\end{equation}
whenever $f \in C^\infty_{\rm c}(\R^n)$ and $\tau \geq 2\lambda R$.

In order to prove \eqref{in:boundednessTzeta} we will first prove the closely related Carleman estimate 
\begin{equation}
\label{in:apriori_Delta_zeta}
 \| g \|^2_{L^2 (B)} \leq \frac{1}{\lambda\tau^2}\| e^{\lambda {\rm x}_n^2/2} (\Delta + 2 \tau {\rm e}_n \cdot \nabla + \tau^2) (e^{-\lambda {\rm x}_n^2/2} g) \|^2_{L^2(\R^n)}
\end{equation}
whenever $g \in \mathcal{S}(\R^n)$. Defining $\varphi (x) = \tau x_n + \lambda x_n^2/2$, the integrand of the right-hand side can be rewritten as
\begin{align*} 
e^{\varphi} \Delta (e^{-\varphi} g)= \Delta g - \nabla \varphi \cdot \nabla g - \nabla \cdot (\nabla \varphi g) + |\nabla \varphi|^2g. 
\end{align*}
Defining the formally self-adjoint ${\rm A}$ and skew-adjoint ${\rm B}$ by
\[{\rm A} g = \Delta g + |\nabla \varphi|^2 g\quad \text{and}\quad  {\rm B} g = - \nabla \varphi \cdot \nabla g - \nabla \cdot (\nabla \varphi g), \]
and integrating by parts, we have that
\begin{equation}
\label{id:sym-sk}
\| ({\rm A} + {\rm B}) g \|^2_{L^2(\R^n)} = \| {\rm A}g \|^2_{L^2(\R^n)} + \| {\rm B}g \|^2_{L^2(\R^n)} + \int_{\R^n} [{\rm A}, {\rm B}] g \overline{g} . 
\end{equation}
where $[{\rm A}, {\rm B}] = {\rm A}{\rm B} - {\rm B}{\rm A}$ denotes the commutator. By the definition of 
$\varphi$, we have
\begin{align*}
{\rm A}g(x) &= \Delta g(x) + (\tau + \lambda x_n)^2 g(x), \\
{\rm B}g(x) &= - 2(\tau + \lambda x_n) \partial_{x_n} g(x) - \lambda g(x),
\end{align*}
which yields
\[[{\rm A}, {\rm B}]g(x) = - 4 \lambda \partial^2_{x_n} g(x) + 4\lambda (\tau + \lambda x_n)^2 g(x).\]
After another integration by parts, we find
\[\int_{\R^n} [{\rm A}, {\rm B}] g \overline{g}  = 4 \lambda \int_{\R^n} |\partial_{x_n} g|^2  + 4 \lambda \int_{\R^n} |\nabla \varphi|^ 2 |g|^2 .\]
so that, substituting this into \eqref{id:sym-sk} and throwing three of the terms away, we find
\[\| e^{\varphi} \Delta (e^{-\varphi} g) \|^2_{L^2(\R^n)} \geq 4 \lambda \int_{\R^n} |\nabla \varphi|^ 2 |g|^2 . \]
As $| \nabla \varphi (x) | \geq \tau - \lambda R$
whenever $|x_n| \leq R$, this yields
\[\| e^{\varphi} \Delta (e^{-\varphi} g)\|^2_{L^2(\R^n)} \geq 4 \lambda (\tau - \lambda R)^2 \| g \|^2_{L^2 (B)}, \]
which implies \eqref{in:apriori_Delta_zeta} whenever
$\tau \geq 2\lambda R$  and 
$g \in \mathcal{S}(\R^n)$. 

Finally, by density, the inequality \eqref{in:apriori_Delta_zeta} also holds for 
every $g \in L^2_{\rm loc}(\R^n)$ such that $$e^{\lambda {\rm x}_n^2/2}  (\Delta + 2 \tau {\rm e}_n \cdot \nabla + \tau^2)(e^{-\lambda {\rm x}_n^2/2} g) \in L^2(\R^n).$$ Choosing
$g = e^{\lambda {\rm x}_n^2/2} \Delta^{\!-1}_{\tau {\rm e}_n} f$
with $f \in C^\infty_{\rm c} (\R^n)$ we find that 
\eqref{in:apriori_Delta_zeta} implies~\eqref{in:boundednessTzeta}.
\end{proof}

\begin{remark}  The proof of Lemma~\ref{lem:L2B_boundedness} yields the following strengthened estimate: If $\rho\in \mathbb{C}^n$ and $\theta := \Re \rho / |\Re \rho|$, 
then
\[ \int_{|\theta \cdot x| < \frac{|\Re \rho|}{2\lambda}}  | \Delta^{\!-1}_{\rho}f(x)|^2e^{\lambda (\theta \cdot x)^2}\dd x \leq \frac{1}{\lambda |\Re \rho|^2} \int_{\R^n} |f(x)|^2e^{\lambda (\theta \cdot x)^2} \dd x \]
whenever $f \in \mathcal{S}(\R^n)$ is such that the right-hand side is finite and $\lambda>1$. 
\end{remark}

\subsection{Estimates for derivatives:} The inequality of Lemma~\ref{lem:L2B_boundedness} has a gain in the sense of~$L^2$, however this is not enough to construct CGO solutions for Lipschitz conductivities since we need to control the first-order partial 
derivatives present in the operator ${\rm M}_q$. For this we consider
\begin{equation}\label{defon}
\|\centerdot\|_{X_{\lambda,\rho}^{1/2}}:=\lambda^{1/4} |\rho|^{1/2} \| \centerdot \|_{L^2 (B,e^{\lambda (\theta \cdot {\rm x})^2})}+ \frac{1}{\lambda^{1/4}} \| \centerdot \|_{\dot{X}^{1/2}_\rho}
\end{equation}
and combine Lemma~\ref{lem:L2B_boundedness} with the trivial inequality \eqref{id}.

\begin{lemma}\label{lem:X1/2_L21}\sl
Consider $\rho\in \mathbb{C}^n$ such that $\rho\cdot\rho= 0$ and write $\theta := \Re \rho / |\Re \rho|$. Then there is a constant $C>1$, depending only on the radius $R$ of $B$, such that
\begin{equation*}
\| \Delta^{\!-1}_{\rho} f\|_{X_{\lambda,\rho}^{1/2}} \leq \frac{C}{\lambda^{1/4} |\rho|^{1/2}} \|f\|_{L^2 (\R^n\!,  e^{\lambda (\theta \cdot {\rm x})^2})}\end{equation*}
whenever $f \in C^\infty_{\rm c}(B)$ and $\lambda>1$ satisfies $|\rho| \geq 4 \lambda R$.
\end{lemma}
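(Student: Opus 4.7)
The plan is to estimate the two pieces of $\|\Delta^{\!-1}_{\rho}f\|_{X^{1/2}_{\lambda,\rho}}$ separately, since the norm splits as a sum of a weighted $L^2(B)$ piece and a $\dot X^{1/2}_\rho$ piece. For the first piece, I would invoke Lemma~\ref{lem:L2B_boundedness} directly: its hypothesis $|\rho|\ge 4\lambda R$ is exactly the one we are assuming, so
\[ \lambda^{1/4}|\rho|^{1/2}\,\| \Delta^{\!-1}_{\rho}f\|_{L^2(B,e^{\lambda(\theta\cdot {\rm x})^2})} \le \lambda^{1/4}|\rho|^{1/2}\cdot \frac{\sqrt{2}}{\lambda^{1/2}|\rho|}\,\|f\|_{L^2(\R^n,e^{\lambda(\theta\cdot {\rm x})^2})} = \frac{\sqrt{2}}{\lambda^{1/4}|\rho|^{1/2}}\,\|f\|_{L^2(\R^n,e^{\lambda(\theta\cdot {\rm x})^2})}, \]
which is already of the desired form.

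For the second piece, I would chain together the trivial inequality \eqref{id} and the localisation estimate \eqref{in:negativeXb}, which combined yield
\[ \| \Delta^{\!-1}_\rho f\|_{\dot X^{1/2}_\rho} \le \|f\|_{\dot X^{-1/2}_\rho} \le \frac{C}{|\rho|^{1/2}}\,\|f\|_{L^2(\R^n)}, \]
for $f\in C^\infty_{\rm c}(B)$. Since $e^{\lambda(\theta\cdot x)^2}\ge 1$ pointwise, $\|f\|_{L^2(\R^n)}\le \|f\|_{L^2(\R^n,e^{\lambda(\theta\cdot {\rm x})^2})}$, and dividing by $\lambda^{1/4}$ gives the matching bound
\[ \frac{1}{\lambda^{1/4}}\|\Delta^{\!-1}_\rho f\|_{\dot X^{1/2}_\rho} \le \frac{C}{\lambda^{1/4}|\rho|^{1/2}}\,\|f\|_{L^2(\R^n,e^{\lambda(\theta\cdot {\rm x})^2})}. \]
Adding the two inequalities and absorbing the constants into a single $C>1$ depending only on $R$ closes the argument.

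There is essentially no obstacle here; the work has already been done in Lemma~\ref{lem:L2B_boundedness} and in the standard estimates \eqref{id} and \eqref{in:negativeXb}. The only delicate point is the bookkeeping of $\lambda$-powers: the factor $\lambda^{1/4}|\rho|^{1/2}$ placed in front of the $L^2(B)$ term in \eqref{defon} is precisely calibrated so that Lemma~\ref{lem:L2B_boundedness} contributes $\lambda^{-1/4}|\rho|^{-1/2}$ on the right-hand side, and the $\lambda^{-1/4}$ in front of the $\dot X^{1/2}_\rho$ term ensures the second contribution has the same shape. That the hypothesis $|\rho|\ge 4\lambda R$ is used only via Lemma~\ref{lem:L2B_boundedness} is worth noting, since the second estimate is uniform in $\lambda$.
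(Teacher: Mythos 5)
Your proposal matches the paper's proof essentially verbatim: the weighted $L^2(B)$ piece is controlled by Lemma~\ref{lem:L2B_boundedness}, and the $\dot X^{1/2}_\rho$ piece by chaining \eqref{id}, \eqref{in:negativeXb}, and the pointwise bound $e^{\lambda(\theta\cdot x)^2}\ge 1$ before dividing by $\lambda^{1/4}$. The bookkeeping of the $\lambda$- and $|\rho|$-powers and the observation that the hypothesis $|\rho|\ge 4\lambda R$ enters only through the first piece are both correct.
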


\begin{proof} The first term in the definition \eqref{defon} is bounded using Lemma~\ref{lem:L2B_boundedness}, so it remains to bound the second term. Combining the trivial inequality \eqref{id} with \eqref{in:negativeXb} we see that
\[ \| \Delta^{\!-1}_{\rho} f \|_{\dot{X}^{1/2}_\rho} \leq \| f \|_{\dot{X}^{-1/2}_\rho } \leq \frac{C}{|\rho|^{1/2}} \| f \|_{L^2(\R^n)}\le \frac{C}{|\rho|^{1/2}} \| f \|_{L^2(\R^n\!, e^{\lambda (\theta \cdot {\rm x})^2})} \]
whenever $f\in C^\infty_{\rm c}(B)$, where the constant $C>1$ depends only on $R$.
Dividing by~$\lambda^{1/4}$ yields the desired estimate for the second term.\end{proof}

\begin{lemma}\label{lem:L2B_X-1/2} \sl
Consider $\rho\in \mathbb{C}^n$ such that $\rho\cdot\rho= 0$ and  write $\theta := \Re \rho / |\Re \rho|$. Then there is a constant $C>1$, depending only on the radius $R$ of $B$, such that
\begin{equation*}
\| \Delta^{\!-1}_{\rho} f \|_{X_{\lambda,\rho}^{1/2}} \leq C\lambda^{1/4}e^{\lambda R^2/2} \| f \|_{\dot{X}^{-1/2}_\rho}
\end{equation*}
whenever $f \in C^\infty_{\rm c}(B)$ and 
$\lambda>1$.
\end{lemma}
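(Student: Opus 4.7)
The estimate is the ``easy'' companion to Lemma~\ref{lem:X1/2_L21}: when $f$ is measured in $\dot{X}^{-1/2}_\rho$ rather than in the weighted $L^2$-norm, we do not have a Carleman estimate tailored to this right-hand side, and the plan is simply to trade the weight for a global factor of $e^{\lambda R^2/2}$ by brute force on the ball $B$, and then use only the classical Haberman--Tataru bounds \eqref{id} and \eqref{eleven}.

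Following the definition \eqref{defon}, I would split the norm
\[
\|\Delta^{\!-1}_{\rho} f\|_{X_{\lambda,\rho}^{1/2}}=\lambda^{1/4}|\rho|^{1/2}\|\Delta^{\!-1}_{\rho} f\|_{L^2(B,e^{\lambda(\theta\cdot {\rm x})^2})}+\lambda^{-1/4}\|\Delta^{\!-1}_{\rho} f\|_{\dot{X}^{1/2}_\rho}
\]
and estimate the two summands separately. For the second summand, the trivial inequality \eqref{id} gives
\[
\lambda^{-1/4}\|\Delta^{\!-1}_{\rho} f\|_{\dot{X}^{1/2}_\rho}\leq \lambda^{-1/4}\|f\|_{\dot{X}^{-1/2}_\rho},
\]
which is clearly dominated by $C\lambda^{1/4}e^{\lambda R^2/2}\|f\|_{\dot{X}^{-1/2}_\rho}$ for $\lambda>1$.

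For the first summand, the key observation is that on $B$ we have $|\theta\cdot x|\le R$, so the exponential weight obeys the pointwise bound $e^{\lambda(\theta\cdot x)^2}\le e^{\lambda R^2}$. This lets me pull the weight out of the integral:
\[
\|\Delta^{\!-1}_{\rho} f\|_{L^2(B,e^{\lambda(\theta\cdot {\rm x})^2})}\le e^{\lambda R^2/2}\|\Delta^{\!-1}_{\rho} f\|_{L^2(B)}.
\]
Now the unweighted piece is handled by the two classical inequalities already recorded: \eqref{eleven} gives $|\rho|^{1/2}\|\Delta^{\!-1}_{\rho} f\|_{L^2(B)}\le C\|\Delta^{\!-1}_{\rho} f\|_{\dot{X}^{1/2}_\rho}$, and the trivial inequality \eqref{id} gives $\|\Delta^{\!-1}_{\rho} f\|_{\dot{X}^{1/2}_\rho}\le \|f\|_{\dot{X}^{-1/2}_\rho}$. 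Chaining these bounds and multiplying by $\lambda^{1/4}$ yields
\[
\lambda^{1/4}|\rho|^{1/2}\|\Delta^{\!-1}_{\rho} f\|_{L^2(B,e^{\lambda(\theta\cdot {\rm x})^2})}\le C\lambda^{1/4}e^{\lambda R^2/2}\|f\|_{\dot{X}^{-1/2}_\rho}.
\]
Adding the bounds for the two summands proves the lemma.

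There is no real obstacle here: unlike Lemma~\ref{lem:X1/2_L21}, no Carleman estimate or condition of the form $|\rho|\ge 4\lambda R$ is needed, because we are not trying to extract any gain from $\lambda$ — we are merely absorbing the weight into a constant. The price paid is the exponential factor $e^{\lambda R^2/2}$, which will later have to be compensated by choosing $\lambda$ not too large (or by pairing this estimate with the ${\rm M}_q$ bounds of Section~\ref{third} in such a way that this factor is dominated by the smallness coming from the weighted $L^2$-bound for ${\rm M}_q$).
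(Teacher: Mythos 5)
Your proof is correct and follows the same route as the paper: the second summand via the trivial inequality \eqref{id}, and the first summand by bounding the weight pointwise by $e^{\lambda R^2}$ on $B$ and then chaining \eqref{eleven} with \eqref{id}. No differences worth noting.
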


\begin{proof} The second term in the definition \eqref{defon} can be bounded easily using the trivial inequality~\eqref{id}, so it remains to bound the first term.
By  \eqref{eleven},  we have
\begin{equation*}
|\rho|^{1/2}\|g \|_{L^2(B, e^{\lambda (\theta \cdot {\rm x})^2})} \leq  e^{\lambda R^2/2}|\rho|^{1/2}\| g \|_{L^2(B)}\leq Ce^{\lambda R^2/2}\| g \|_{\dot{X}^{1/2}_\rho}\end{equation*}
whenever $g\in \dot{X}^{1/2}_\rho$, where the constant $C>1$ depends only on $R$.  Taking $g=\Delta^{\!-1}_{\rho} f$ and multiplying the inequality by $\lambda^{1/4}$  yields
$$
\lambda^{1/4}|\rho|^{1/2}\|\Delta^{\!-1}_{\rho} f \|_{L^2(B, e^{\lambda (\theta \cdot {\rm x})^2})}  \le  C\lambda^{1/4}e^{\lambda R^2/2}\| \Delta^{\!-1}_{\rho} f\|_{\dot{X}^{1/2}_\rho},
$$
 A final  application of the trivial inequality \eqref{id} yields the desired estimate.
\end{proof}

\section{The new spaces} \label{spaces}

We must extend the domain of $\Delta^{\!-1}_{\rho}$ by taking limits and so we carefully define Banach spaces using equivalence classes. 
 We define 
$$\dot{X}^{1/2}_\rho (B) := \big\{ [f]_B : f \in \dot{X}^{1/2}_\rho \big\},$$ where the equivalence class $[f]_B$ is given by
$$[f]_B := \big\{ g \in \dot{X}^{1/2}_\rho\, :\, {\rm ess}\supp (f  - g) \subset \R^n \setminus B \big\}.$$ 
The space can be
endowed with the norm
\[ \big\| [f]_B \big\|_{\dot{X}^{1/2}_\rho (B)}: = \inf \big\{ \| g\|_{\dot{X}^{1/2}_\rho} : g \in [f]_B \big\},\]
so that 
$$\Big(\dot{X}^{1/2}_\rho (B), \| \centerdot \|_{\dot{X}^{1/2}_\rho (B)} \Big)\ \
\text{is a Banach space.}$$ 
We can rephrase the inequality \eqref{eleven} in terms of this norm. Indeed as
\begin{equation*}
|\rho|^{1/2}\|g \|_{L^2(B)} \le C\| g \|_{\dot{X}^{1/2}_\rho}
\end{equation*}
whenever $g\in [f]_B$, where $C > 1$ is a constant depending only on $R$, we can take the infimum to find \begin{equation}\label{23}
|\rho|^{1/2} \| f \|_{L^2(B)} \le  C\| f \|_{\dot{X}^{1/2}_\rho(B)}.
\end{equation}
Identifying the elements $[f]_{B}$ of $\dot{X}^{1/2}_{\rho} ({B})$ with $f|_{B}$, the restriction of $f$ to~${B}$,
 this yields the embedding
\begin{equation}\label{embed}\dot{X}^{1/2}_\rho (B) \hookrightarrow L^2(B).\end{equation}
Moreover, we have the following equivalence of norms. 

\subsection{Equivalence with the Sobolev norm:} There are constants $c,C>0$, depending only on $R$, such that \begin{equation}\label{equivall}
c|\rho|^{-1/2}\|f\|_{H^1(B)}\le \|f\|_{\dot{X}^{1/2}_\rho(B)}\le C|\rho|^{1/2}\|f\|_{H^1(B)}
\end{equation}
whenever $f\in H^1(B)$ and $|\rho|>1$. To see this, note that $|m_\rho(\xi)|\le 2(1+|\rho|)(1+|\xi|^2)$  for all $\xi\in \mathbb{R}^n$, so that
\begin{equation*}
\|g\|_{\dot{X}^{1/2}_\rho}\le 2^{1/2}(1+|\rho|)^{1/2}\|g\|_{H^1(\mathbb{R}^n)}
\end{equation*}
whenever   $g\in H^1(\mathbb{R}^n)$. Thus $H^1$-extensions are also $\dot{X}^{1/2}_\rho$-extensions, so the right-hand inequality of~\eqref{equivall} follows by taking the infimum over $H^1$-extensions $g$ of $f\in H^1(B)$.

For the left-hand inequality, consider $g_{B}:=\chi_Bg$, where $\chi_B$ is a smooth function equal to one on $B$ and supported on $2B$. Then, separating the low and high frequencies, 
\begin{align*}
\|g_{B}\|^2_{H^1(\mathbb{R}^n)} 
&\le \|g_{B}\|^2_{L^2(\mathbb{R}^n)}+16|\rho|^2\!\!\int_{|\xi|\le 4|\rho|} |\widehat{g_{B}}(\xi)|^2\, \dd \xi+ 2\int_{|\xi|> 4|\rho|} |m_\rho(\xi)||\widehat{g_{B}}(\xi)|^2\, \dd \xi\\
&\le C|\rho|\|g\|^2_{\dot{X}_{\rho}^{1/2}}
\end{align*}
whenever $|\rho|>1$, where the second inequality follows from Lemma~2.2 of~\cite{zbMATH06145493}. Restricting the left-hand side to $B$, we find that
\begin{align*}\nonumber
\|g\|_{H^1(B)}\le C|\rho|^{1/2}\|g\|_{\dot{X}_{\rho}^{1/2}}.
\end{align*}
Now if $g$ is an $\dot{X}^{1/2}_\rho$-extension of $f\in \dot{X}^{1/2}_\rho(B)$, then $f=g$ almost everywhere in $B$ and so we can replace $g$ on the left-hand side by~$f$ and take the infimum over $g$ to obtain the left-hand inequality of \eqref{equivall}.

\subsection{The main space:} 
We define our main norm by
\vspace{6pt}
\begin{mdframed}[style=MyFrame]
\[ \| \centerdot \|_{X^{1/2}_{\lambda, \rho} ({B})} : f \in \dot{X}^{1/2}_\rho ({B}) \mapsto \lambda^{1/4} |\rho|^{1/2} \| f \|_{L^2 ({B},e^{\lambda (\theta \cdot {\rm x})^2})} + \frac{1}{\lambda^{1/4}} \| f \|_{\dot{X}^{1/2}_\rho ({B})}, \]
\end{mdframed}
\vspace{2pt}
and note that by \eqref{23} it is equivalent to the inhomogeneous norm;
\begin{align}\label{equiv}
\lambda^{-1/4}\| f\|_{\dot{X}^{1/2}_{\rho}({B})}\le \| f\|_{X^{1/2}_{\lambda,\rho}({B})}\le C\lambda^{1/4}e^{\lambda R^2/2}\| f\|_{\dot{X}^{1/2}_{\rho}({B})},
\end{align}
where $C>1$ depends only on $R$.
Thus we can conclude that \begin{equation}\label{X}\Big(\dot{X}^{1/2}_\rho ({B}), \| \centerdot \|_{X^{1/2}_{\lambda, \rho} ({B})} \Big)\ 
\text{is a Banach space.}\end{equation}
Later we will use that the constants in this norm equivalence are independent of~$|\rho|$. 

\subsection{A minor variant of the main space:} We also consider the  norm $\| \centerdot\|_{Y^{1/2}_{\lambda, -\rho}({B})}$ defined by
\[  f \in \dot{X}^{1/2}_{-\rho} ({B}) \mapsto \max\Big\{\lambda^{1/4} |\rho|^{1/2} \|f\|_{L^2 ({B}, e^{- \lambda (\theta \cdot {\rm x})^2} )},\frac{1}{\lambda^{1/4}e^{\lambda R^2/2}} \|f\|_{\dot{X}^{1/2}_{-\rho}(B)}\Big\}. \]
Notice that little more than some signs have changed. As before, this norm is equivalent to the inhomogeneous norm;
\begin{align}\label{equivY}
\frac{1}{\lambda^{1/4}e^{\lambda R^2/2}} \| f\|_{\dot{X}^{1/2}_{-\rho}({B})}\le \| f\|_{Y^{1/2}_{\lambda,-\rho}({B})}\le C\lambda^{1/4}\| f\|_{\dot{X}^{1/2}_{-\rho}({B})},
\end{align}
where $C>1$ depends only on $R$,  and so
\begin{equation}
\label{term:Y1/2}
\Big(\dot{X}^{1/2}_{-\rho} ({B}), \| \centerdot \|_{Y^{1/2}_{\lambda, -\rho} ({B})}\Big)\ \ \text{is a Banach space.}
\end{equation}
Recalling the embedding \eqref{embed}, this can be identified with the intersection of the spaces
\[\Big(L^2 ({B}), \lambda^{1/4} |\rho|^{1/2} \| \centerdot \|_{L^2 ({B},e^{-\lambda (\theta \cdot {\rm x})^2})}\Big)\quad 
\text{and}
\quad\Big(\dot{X}^{1/2}_{-\rho} ({B}), \frac{1}{\lambda^{1/4}e^{\lambda R^2/2}}\| \centerdot \|_{\dot{X}^{1/2}_{-\rho} ({B})}\Big), \]
As \eqref{term:Y1/2} is dense in both of these spaces, we can identify the dual of their intersection with the sum of their duals; see for example~\cite[Theorem 3.1]{zbMATH03457577}. This provides an alternative identification of the dual of \eqref{term:Y1/2} which we describe now.

 \subsection{The dual space:} Let $\dot{X}^{-1/2}_{\rho, {\rm c}}({B})$ denote the Banach completion of $C^\infty_{\rm c}({B})$ with respect to the norm
\[ \| \centerdot \|_{\dot{X}^{-1/2}_\rho} : f \in C^\infty_{\rm c}({B}) \mapsto \big\| |m_\rho|^{-1/2} \widehat{f}\, \big\|_{L^2(\R^n)}. \] 
We endow $L^2({B}) + \dot{X}^{-1/2}_{\rho, {\rm c}}({B})$ with the norm
 \vspace{6pt}
\begin{mdframed}[style=MyFrame]
\[\| f \|_{Y^{-1/2}_{\lambda, \rho, {\rm c}} ({B})} := \inf_{f = f^\flat + f^\sharp} \Big( \frac{1}{\lambda^{1/4} |\rho|^{1/2}} \|  f^\flat \|_{L^2(B, e^{\lambda (\theta \cdot {\rm x})^2})} + \lambda^{1/4} e^{\lambda R^2/2} \| f^\sharp \|_{\dot{X}^{-1/2}_\rho}  \Big) \]
\end{mdframed}
\vspace{2pt}
with the infimum taken over all $f^\flat \in L^2({B})$ and $f^\sharp \in \dot{X}^{-1/2}_{\rho, {\rm c}}({B})$.
Then
   \begin{equation}
\label{term:Y-1/2}
\Big( L^2({B}) + \dot{X}^{-1/2}_{\rho, {\rm c}}({B}), \| \centerdot \|_{Y^{-1/2}_{\lambda, \rho, {\rm c}} ({B})}\Big)\ \
\text{is a Banach space.}
\end{equation}
With real-bracket pairings, Plancherel's identity takes the form
\begin{equation}\label{planchar}
\langle f, g \rangle=\int_{\R^n} \widehat{f}(\xi)g^\vee(\xi)\,\dd\xi =\int_{\R^n} \widehat{f}(\xi)\widehat{g}(-\xi)\,\dd\xi, 
\end{equation}
so that, by similar arguments to those used for Sobolev spaces,  we find that
$$
\Big(\dot{X}^{1/2}_{-\rho} ({B}), \frac{1}{\lambda^{1/4}e^{\lambda R^2/2}}\| \centerdot \|_{\dot{X}^{1/2}_{-\rho} ({B})}\Big)^\ast\cong\Big(\dot{X}^{-1/2}_{\rho,{\rm c}} ({B}), \lambda^{1/4}e^{\lambda R^2/2}\| \centerdot \|_{\dot{X}^{-1/2}_{\rho}}\Big);
$$
see for example \cite[Proposition 2.9]{zbMATH00764042}. On the other hand, it is easy to see that
$$
\Big(L^2 ({B}), \lambda^{1/4} |\rho|^{1/2} \| \centerdot \|_{L^2 ({B},e^{-\lambda (\theta \cdot {\rm x})^2})}\Big)^\ast\cong\Big(L^2 ({B}), \frac{1}{\lambda^{1/4} |\rho|^{1/2}} \| \centerdot \|_{L^2 ({B},e^{\lambda (\theta \cdot {\rm x})^2})}\Big).
$$
Thus the dual of \eqref{term:Y1/2} can be identified with  the sum of the two dual spaces as described in \eqref{term:Y-1/2}; see for example \cite[Theorem 3.1]{zbMATH03457577}.

\section{The locally defined extension of $\Delta^{\!-1}_{\rho}$}\label{trup} 
We are now ready to extend the domain of  $\Delta^{\!-1}_{\rho}$ by combining Lemmas~\ref{lem:X1/2_L21} and~\ref{lem:L2B_X-1/2}. This extension will make no sense outside of $B$ in contrast with the globally defined extension of $f\in C^\infty_{\rm c}(B)\mapsto \Delta^{\!-1}_{\rho} f$ given by the trivial inequality~\eqref{id}. We denote the globally defined extension by $\Delta^{\!-1}_{\rho}$ and the locally defined extension by~${\rm T}^B_{\!\rho}$.

\begin{corollary}\label{cor:boundedness} \sl
Consider $\rho\in \mathbb{C}^n$ such that $\rho\cdot\rho =0$ and $ \lambda>1$. 
Then
 there is a continuous linear extension ${\rm T}^B_{\!\rho}$ of $$f\in C^\infty_{\rm c}(B)\mapsto \Delta^{\!-1}_{\rho} f|_{B}$$ and a constant $C>1$, depending only on the radius $R$ of $B$, such that
\[ \| {\rm T}^B_{\!\rho} f \|_{X^{1/2}_{\lambda, \rho}(B)} \leq C \| f \|_{Y^{-1/2}_{\lambda, \rho, {\rm c}}(B)}
\]
whenever $f \in L^2(B) + \dot{X}^{-1/2}_{\rho, {\rm c}}(B)$ and
$|\rho| \geq 4 \lambda R$.
\end{corollary}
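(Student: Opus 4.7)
The plan is to combine Lemmas~\ref{lem:X1/2_L21} and~\ref{lem:L2B_X-1/2} and then extend by density. For $f\in C^\infty_{\rm c}(B)$, the norm $\|\Delta^{\!-1}_\rho f\|_{X^{1/2}_{\lambda,\rho}}$ appearing in those lemmas dominates $\|\Delta^{\!-1}_\rho f|_B\|_{X^{1/2}_{\lambda,\rho}(B)}$, since passing to the equivalence class $[\Delta^{\!-1}_\rho f]_B$ only decreases the $\dot{X}^{1/2}_\rho$-summand in~\eqref{defon}. The two lemmas therefore yield, for every $f\in C^\infty_{\rm c}(B)$, the bounds
\[\|\Delta^{\!-1}_\rho f|_B\|_{X^{1/2}_{\lambda,\rho}(B)} \le \frac{C}{\lambda^{1/4}|\rho|^{1/2}}\|f\|_{L^2(B,e^{\lambda(\theta\cdot {\rm x})^2})}\]
and
\[\|\Delta^{\!-1}_\rho f|_B\|_{X^{1/2}_{\lambda,\rho}(B)} \le C\lambda^{1/4}e^{\lambda R^2/2}\|f\|_{\dot{X}^{-1/2}_\rho}.\]

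By density of $C^\infty_{\rm c}(B)$ in each of $L^2(B)$ and $\dot{X}^{-1/2}_{\rho,{\rm c}}(B)$, and completeness of $X^{1/2}_{\lambda,\rho}(B)$ from~\eqref{X}, these two inequalities extend $f\mapsto\Delta^{\!-1}_\rho f|_B$ to bounded operators $T^\flat\colon L^2(B)\to X^{1/2}_{\lambda,\rho}(B)$ and $T^\sharp\colon \dot{X}^{-1/2}_{\rho,{\rm c}}(B)\to X^{1/2}_{\lambda,\rho}(B)$, both reducing to $\Delta^{\!-1}_\rho f|_B$ on $C^\infty_{\rm c}(B)$. I would then set
\[{\rm T}^B_{\!\rho}(f^\flat+f^\sharp):=T^\flat f^\flat+T^\sharp f^\sharp\]
for $f^\flat\in L^2(B)$ and $f^\sharp\in\dot{X}^{-1/2}_{\rho,{\rm c}}(B)$. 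The main task is then to verify that this definition is independent of the decomposition, equivalently that $T^\flat$ and $T^\sharp$ agree on $L^2(B)\cap\dot{X}^{-1/2}_{\rho,{\rm c}}(B)$.

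This consistency is the step I expect to be the main obstacle, and I would handle it using the embedding $L^2(B)\hookrightarrow\dot{X}^{-1/2}_{\rho,{\rm c}}(B)$ provided by~\eqref{in:negativeXb}. Indeed, given $g\in L^2(B)$, any approximating sequence $g_n\in C^\infty_{\rm c}(B)$ with $g_n\to g$ in $L^2(B)$ is automatically Cauchy in $\dot{X}^{-1/2}_\rho$, so the single sequence $\Delta^{\!-1}_\rho g_n|_B$ converges in $X^{1/2}_{\lambda,\rho}(B)$ simultaneously to $T^\flat g$ and to $T^\sharp g$, forcing them to coincide. Injectivity of the embedding (needed for the intersection to be well-defined) follows by testing against $\phi\in C^\infty_{\rm c}(\R^n)$ using Cauchy--Schwarz with~\eqref{planchar}, noting that $\|\phi\|_{\dot{X}^{1/2}_\rho}<\infty$ since $|m_\rho(\xi)|\le C(1+|\xi|^2)$ and $\widehat{\phi}$ is Schwartz.

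Finally, for arbitrary $f\in L^2(B)+\dot{X}^{-1/2}_{\rho,{\rm c}}(B)$ and any admissible decomposition $f=f^\flat+f^\sharp$, the triangle inequality combined with the bounds on $T^\flat$ and $T^\sharp$ gives
\[\|{\rm T}^B_{\!\rho} f\|_{X^{1/2}_{\lambda,\rho}(B)} \le C\Bigl(\tfrac{1}{\lambda^{1/4}|\rho|^{1/2}}\|f^\flat\|_{L^2(B,e^{\lambda(\theta\cdot {\rm x})^2})} + \lambda^{1/4}e^{\lambda R^2/2}\|f^\sharp\|_{\dot{X}^{-1/2}_\rho}\Bigr),\]
and taking the infimum over decompositions yields the required estimate.
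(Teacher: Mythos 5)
Your proof is correct and follows essentially the same route as the paper: use Lemmas~\ref{lem:X1/2_L21} and~\ref{lem:L2B_X-1/2} to extend $f \mapsto \Delta^{\!-1}_\rho f|_B$ by density from $L^2(B)$ and from $\dot{X}^{-1/2}_{\rho,{\rm c}}(B)$ separately, define ${\rm T}^B_{\!\rho}$ additively on the sum, and take the infimum over decompositions. Your explicit treatment of the well-definedness on the intersection, via the embedding provided by~\eqref{in:negativeXb} and using that one $L^2$-Cauchy sequence in $C^\infty_{\rm c}(B)$ is automatically $\dot{X}^{-1/2}_\rho$-Cauchy, is exactly the step the paper leaves to the reader with the remark ``one can show that this is well-defined\dots''
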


\begin{proof} By Lemma~\ref{lem:X1/2_L21} and the 
density of $C^\infty_{\rm c}({B})$ in
\begin{equation*}
\label{sp:L2cB}
\Big(L^2({B}), \frac{1}{\lambda^{1/4} |\rho|^{1/2}} \|  \centerdot \|_{L^2 (B, e^{\lambda (\theta \cdot {\rm x})^2})}\Big),
\end{equation*}
we can extend $f\in C^\infty_{\rm c}(B)\mapsto \Delta^{\!-1}_{\rho} f|_{B}$ to a bounded linear operator ${\rm T}^B_{\!\rho}$ that satisfies 
\[\|{\rm T}^B_{\!\rho}  f\|_{X^{1/2}_{\lambda, \rho}({B})}  \le \frac{C}{\lambda^{1/4} |\rho|^{1/2}} \|  f \|_{L^2(B, e^{\lambda (\theta \cdot {\rm x})^2})} \]
whenever $f \in L^2({B})$. The constant $C>1$ depends only on $R$. On the other hand, by Lemma~\ref{lem:L2B_X-1/2} and the density of $C^\infty_{\rm c}({B})$ in
\begin{equation*}
\label{sp:X-1/2c}
\Big(\dot{X}^{-1/2}_{\rho, {\rm c}}({B}),  \lambda^{1/4} e^{\lambda R^2/2} \| \centerdot \|_{\dot{X}^{-1/2}_\rho}\Big),
\end{equation*}
we can extend $f\in C^\infty_{\rm c}(B)\mapsto \Delta^{\!-1}_{\rho} f|_{B}$ to a bounded linear operator ${\rm T}^B_{\!\rho}$ that satisfies
\[ \|{\rm T}^B_{\!\rho}  f\|_{X^{1/2}_{\lambda, \rho}({B})} \le  C\lambda^{1/4} e^{\lambda R^2/2} \| f \|_{\dot{X}^{-1/2}_\rho} \]
whenever $f \in \dot{X}^{-1/2}_{\rho, {\rm c}}({B})$.
Again, the constant $C>1$ depends only on $R$.

Considering now $f = f^\flat + f^\sharp$
with $f^\flat \in L^2({B})$ and $f^\sharp \in \dot{X}^{-1/2}_{\rho, {\rm c}}({B})$, we define
$${\rm T}^B_{\!\rho} f := {\rm T}^B_{\!\rho}  f^\flat + {\rm T}^B_{\!\rho}  f^\sharp.$$ One can show that this is well-defined using the linearity of the previous extensions and the  density of $C^\infty_{\rm c}(B)$. Then, by the triangle inequality and the previous bounds,
\[ \| {\rm T}^B_{\!\rho} f \|_{X^{1/2}_{\lambda, \rho}(B)} \leq C\Big( \frac{1}{\lambda^{1/4} |\rho|^{1/2}} \| f^\flat \|_{L^2(e^{\lambda (\theta \cdot {\rm x})^2})} + \lambda^{1/4} e^{\lambda R^2/2} \| f^\sharp \|_{\dot{X}^{-1/2}_\rho}\Big), \]
where the constant $C$ depends only on $R$. Since the left-hand side is independent of the representation $f= f^\flat + f^\sharp$, we can 
take the infimum over such representations, and the desired inequality follows.
\end{proof}

\section{The bound for ${\rm M}_q$}\label{third}

With a view to further applications, we write part of this section in greater generality. Consider bounded functions $ a_0, a_1, \dots , a_n \subset L^\infty (\mathbb{R}^n)$ 
with compact support;
\begin{equation*}
\label{term:support_cond}
 \supp a_j  \subset \Omega \subset B=\{ x \in \R^n : |x| < R \},
\end{equation*}
where $R:=2\sup_{x\in\Omega}|x|$. Define the bilinear form $\mathcal{B}:H^1(B)\times H^1(B)\to \mathbb{C}$ by
\[ \mathcal{B}(f , g):=\int_{\Omega} a_0 f g \, +  \int_{\Omega} A \cdot \nabla (f  g) , \]
where $A$ is the vector field with components $(a_1, \dots , a_n)$. This is well-defined by an application of the product rule, followed by the Cauchy--Schwarz inequality.

\begin{proposition}\label{prop:boundednessBILINEAR}\sl
Consider   $\rho \in \C^n$ such 
that $\rho\cdot\rho=0$ and $ \lambda>1$. Then there is a constant $C>1$, depending only on the radius $R$ of $B$, such that
\[|\mathcal{B}(f , g)| \le  C\Big( \frac{1}{\lambda^{1/2}|\rho|}+\frac{1}{\lambda^{1/2}} + \frac{e^{\lambda R^2/2}}{|\rho|^{1/2}}  \Big) \sum_{j=0}^n \| a_j \|_{L^\infty(\Omega)} \| f \|_{X^{1/2}_{\lambda, \rho} ({B})} \| g \|_{Y_{\lambda, -\rho}^{1/2} ({B})} \]
whenever $(f ,g) \in \dot{X}_{\rho}^{1/2}(B) \times \dot{X}_{-\rho}^{1/2}(B)$. 
\end{proposition}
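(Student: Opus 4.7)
The plan is to decompose $\mathcal{B}(f,g) = I_0 + I_A$ into its zeroth-order piece $I_0 := \int_\Omega a_0 fg$ and first-order piece $I_A := \sum_{j=1}^n \int_\Omega a_j \partial_j(fg)$, and to bound each separately using Cauchy--Schwarz against the conjugate weights $e^{\pm\lambda(\theta\cdot x)^2/2}$ that are built into the $X^{1/2}_{\lambda,\rho}(B)$- and $Y^{1/2}_{\lambda,-\rho}(B)$-norms.

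For $I_0$, a direct Cauchy--Schwarz with those weights gives
\[
|I_0| \le \|a_0\|_\infty \|f\|_{L^2(B,\, e^{\lambda(\theta\cdot x)^2})}\,\|g\|_{L^2(B,\,e^{-\lambda(\theta\cdot x)^2})}.
\]
By inspection of the definitions each factor on the right is bounded by $(\lambda^{1/4}|\rho|^{1/2})^{-1}$ times the corresponding $X$- or $Y$-norm, and the product accounts for the first summand $\frac{1}{\lambda^{1/2}|\rho|}$ of the stated bound.

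For $I_A$, I would expand via the product rule $\partial_j(fg) = f\partial_j g + g\partial_j f$ and estimate $\int a_j f\partial_j g$ and $\int a_j g\partial_j f$ term by term. Each integral is handled by a weighted Cauchy--Schwarz with one factor placed in a weighted $L^2$-norm fed from the $X$ or $Y$ norm and the other, involving the gradient, controlled using: the Sobolev equivalence $\|\nabla u\|_{L^2(B)} \le C|\rho|^{1/2}\|u\|_{\dot{X}^{1/2}_\rho(B)}$ from \eqref{equivall}; the pointwise bound $e^{\pm\lambda(\theta\cdot x)^2} \le e^{\lambda R^2}$ on $B$, which converts weighted to unweighted $L^2$-norms at cost $e^{\lambda R^2/2}$; and the definition-level comparisons $\|u\|_{\dot{X}^{1/2}_\rho(B)} \le \lambda^{1/4}\|u\|_{X^{1/2}_{\lambda,\rho}(B)}$ and $\|u\|_{\dot{X}^{1/2}_{-\rho}(B)} \le \lambda^{1/4} e^{\lambda R^2/2}\|u\|_{Y^{1/2}_{\lambda,-\rho}(B)}$. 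The pairing in which the gradient lands on the side whose $Y$-norm carries the $e^{\lambda R^2/2}$-penalty is what produces the summand $\frac{e^{\lambda R^2/2}}{|\rho|^{1/2}}$, whereas the more symmetric pairing, in which the gradient is absorbed by a direct computation of $\nabla(e^{\pm\lambda(\theta\cdot x)^2/2}g)$ whose extra $\lambda R$ terms are swallowed by the gain $(\lambda^{1/4}|\rho|^{1/2})^{-1}$ of the conjugate weighted $L^2$, yields the summand $\frac{1}{\lambda^{1/2}}$.

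The delicate step is the bookkeeping inside $I_A$: one must arrange that no single pairing simultaneously incurs both the $|\rho|^{1/2}$ penalty from the Sobolev equivalence and the $e^{\lambda R^2/2}$ penalty from converting between weighted and unweighted $L^2$, since that combination would produce a bare $e^{\lambda R^2/2}$ factor which cannot be made small. The three-summand form of the bound is precisely what is needed for the contraction argument of Section~\ref{contraction}: one first chooses $\lambda$ large enough to absorb $\frac{1}{\lambda^{1/2}}$ and then, with $\lambda$ fixed, chooses $|\rho|$ much larger than $e^{\lambda R^2}$ to absorb $\frac{e^{\lambda R^2/2}}{|\rho|^{1/2}}$; the zeroth-order contribution $\frac{1}{\lambda^{1/2}|\rho|}$ is automatically negligible in the same regime.
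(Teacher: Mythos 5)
Your treatment of the zeroth-order piece $\int_\Omega a_0 fg$ is correct and matches the paper. But the treatment of the first-order piece does not go through: the paper's proof hinges on two frequency-localisation ideas that your plan omits, and without them every one of the pairings you propose produces a bare $e^{\lambda R^2/2}$ factor, which is precisely the thing that must not appear.

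Here is the difficulty. If you apply the product rule and Cauchy--Schwarz directly, then to control a term like $\int a_j f\,\partial_j g$ you must estimate either $\|\nabla g\|_{L^2(B,e^{-\lambda(\theta\cdot x)^2})}$ or $\|\nabla g\|_{L^2(B)}$. The only tool your proposal offers for this is the equivalence~\eqref{equivall}, which gives $\|\nabla g\|_{L^2(B)}\le C|\rho|^{1/2}\|g\|_{\dot X^{1/2}_{-\rho}(B)}$, and then $\|g\|_{\dot X^{1/2}_{-\rho}(B)}\le\lambda^{1/4}e^{\lambda R^2/2}\|g\|_{Y^{1/2}_{\lambda,-\rho}(B)}$. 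The partner factor $\|f\|_{L^2(B,e^{\lambda(\theta\cdot x)^2})}\le(\lambda^{1/4}|\rho|^{1/2})^{-1}\|f\|_{X^{1/2}_{\lambda,\rho}(B)}$ cancels the $|\rho|^{1/2}$ and the $\lambda^{1/4}$ exactly, leaving $Ce^{\lambda R^2/2}$. The other ordering $\int a_j g\,\partial_j f$ fails identically (the weight $e^{\lambda(\theta\cdot x)^2}\ge1$ forces you to pay $e^{\lambda R^2/2}$ to relate the weighted gradient to the unweighted one). So the arrangement you say you will make --- avoiding a simultaneous $|\rho|^{1/2}$ and $e^{\lambda R^2/2}$ hit --- is not achievable with the tools you list: the two penalties arrive glued together because the Sobolev equivalence is your only route from a gradient to the $\dot X^{1/2}$-norm, and that equivalence costs $|\rho|^{1/2}$. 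A bound of size $e^{\lambda R^2/2}$ without a compensating $|\rho|^{-1/2}$ cannot be made small, and the contraction in Section~\ref{contraction} would not follow.

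The paper avoids this through two steps you do not mention. First, it decomposes $A=A^\flat+A^\sharp$ by a frequency cut-off at scale $|\rho|$, integrates by parts in the $A^\flat$ term, and uses $\|\nabla\cdot A^\flat\|_\infty\le C|\rho|\|A\|_\infty$ to reduce that piece to a zeroth-order estimate; this is where $\frac{1}{\lambda^{1/2}}$ actually comes from (your ``direct computation of $\nabla(e^{\pm\lambda(\theta\cdot x)^2/2}g)$'' does not produce it --- the $Y$-norm does not control $\|\nabla(e^{-\lambda(\theta\cdot x)^2/2}g)\|_{L^2}$). Second, for the $A^\sharp$ term the paper inserts low/high frequency filters $L,H$ into the product $f_B g_B$; the low--low piece vanishes against $A^\sharp$ by Plancherel, and the surviving pieces are controlled using the sharp bound $\|\nabla H u\|_{L^2(\R^n)}\le C\|u\|_{\dot X^{1/2}}$ from Lemma~2.2 of~\cite{zbMATH06145493}, which --- because the high-frequency filter keeps $|\xi|\gtrsim|\rho|$ away from the zero set of $m_\rho$ --- carries \emph{no} $|\rho|^{1/2}$ penalty, in contrast to~\eqref{equivall}. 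This is what furnishes the crucial $\frac{e^{\lambda R^2/2}}{|\rho|^{1/2}}$ with the compensating power of $|\rho|$. Both frequency decompositions are essential; without them the estimate fails.
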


\begin{proof} 
For the first term, we note that by the Cauchy--Schwarz inequality,
\begin{equation*}
\label{in:a_0}
\begin{aligned}
\Big| \int_{\Omega} a_0 fg \, \Big| & \leq \| a_0 \|_{\infty} \| e^{\lambda (\theta \cdot {\rm x})^2/2} f\|_{L^2({B})} \| e^{- \lambda (\theta \cdot {\rm x})^2/2} g \|_{L^2({B})}\\
& \leq \frac{1}{\lambda^{1/2}} \frac{1}{|\rho|} \| a_0 \|_{\infty} \|f \|_{X^{1/2}_{\lambda, \rho}({B})} \|g \|_{Y_{\lambda, -\rho}^{1/2}({B})}
\end{aligned}
\end{equation*}
whenever $(f,g)\in \dot{X}_{\rho}^{1/2}(B) \times \dot{X}_{-\rho}^{1/2}(B)$.
The second inequality follows directly from the weightings in the definition of the norms. 

For the more difficult first order term, we consider a positive and smooth function~$\chi$, equal to one on the  ball of radius $1/2$, supported in the unit ball, and bounded above by one. Then we work with $f_{\!B}:=\chi_Bf$ and $g_{B}:=\chi_Bg$, where $\chi_B:=\chi(\,\centerdot\,/R)$ is  equal to one on $\Omega$ and  supported on $B$. Letting $A^\flat $ denote the vector field with components
\[ a^\flat_j (x) := \frac{1}{(2 \pi)^{n}} \int_{\R^n} e^{i x \cdot \xi} \chi \Big( \frac{\xi}{16|\rho|} \Big) \widehat{a}_j (\xi) \, \dd \xi \]
 for all $x \in \R^n$ and $j =1, \dots , n$, and letting $A^\sharp := A - A^\flat$,  by integration by parts,
\[ \int_{\Omega} A \cdot \nabla (f g) \, =  - \int_{\R^n} \nabla \cdot A^\flat \, f_{\!B}g_{B} \, + \int_{\R^n} A^\sharp \cdot \nabla (f_{\!B}g_{B}). \]
Noting that $\|\nabla \cdot A^\flat\|_\infty\leq C |\rho| \|A\|_\infty$, the first term can be bounded as before;
\begin{align*}\nonumber\Big| \int_{\R^n} \nabla \cdot A^\flat \, f_{\!B}g_{B} \, \Big| &\leq C \| \nabla \cdot A^\flat \|_{\infty} \| e^{\lambda (\theta \cdot {\rm x})^2/2} f \|_{L^2(B)} \| e^{-\lambda (\theta \cdot {\rm x})^2/2} g \|_{L^2(B)}\\
&\leq C \| A\|_{\infty}  |\rho|^{1/2}\|f\|_{L^2(B, e^{\lambda (\theta \cdot {\rm x})^2})}|\rho|^{1/2}\|g \|_{L^2(B, e^{-\lambda (\theta \cdot {\rm x})^2})}.
\end{align*}
Again by the weightings in the definitions of the norms, this implies that
\begin{align}\nonumber\Big| \int_{\R^n} \nabla \cdot A^\flat \, f_{\!B}g_{B} \, \Big| &\leq C  \frac{1}{\lambda^{1/2}} \| A\|_{\infty} \|f \|_{X^{1/2}_{\lambda, \rho}(B)} \|g\|_{Y_{\lambda,- \rho}^{1/2}(B)}\label{in:Asharp}
\end{align}
whenever $(f,g)\in \dot{X}_{\rho}^{1/2}(B) \times \dot{X}_{-\rho}^{1/2}(B)$.

It remains to show that
\begin{equation}\label{end}
\Big|\int_{\R^n} A^\sharp \cdot \nabla (f_{\!B}g_{B})\Big|\leq C \frac{e^{\lambda R^2/2}}{|\rho|^{1/2}}  \| A \|_{\infty} \| f \|_{X^{1/2}_{\lambda, \rho} ({B})} \| g \|_{Y_{\lambda,-\rho}^{1/2} ({B})}.
\end{equation}
Using the product rule, we can separate into two similar terms;
\begin{equation}\label{anoth}
\int_{\R^n} A^\sharp \cdot \nabla (f_{\!B}g_{B}) = \int_{\R^n} A^\sharp \cdot \nabla f_{\!B}g_{B} \, + \int_{\R^n} A^\sharp \cdot \nabla g_{B} f_{\!B},
\end{equation}
and initially treat the first term on the right-hand side (the second term will eventually be dealt with by symmetry). 
We decompose the integral as
\[ \int_{\R^n} A^\sharp \cdot \nabla f_{\!B}g_{B} \, = \int_{\R^n} A^\sharp \cdot \nabla L f_{\!B} L g_{B} \, + \int_{\R^n} A^\sharp \cdot \nabla L f_{\!B} H g_{B} \, + \int_{\R^n} A^\sharp \cdot \nabla H\! f_{\!B}g_{B}, \]
 where $L$ denotes the low-frequency filter defined by
\[ L f({\rm x}) := \frac{1}{(2 \pi)^{n}} \int_{\R^n} e^{i {\rm x} \cdot \xi} \chi \Big( \frac{\xi}{4 |\rho|} \Big) \widehat{f} (\xi) \, \dd \xi
\]
and $H:=I-L$.
By the properties of $\chi$, the frequency supports of $\nabla L f_{\!B} L g_{B}$ and $A^\sharp$ are disjoint, so that by Plancherel's identity the first term is in fact zero, yielding
\[ \int_{\R^n} A^\sharp \cdot \nabla f_{\!B}g_{B} \, = \int_{\R^n} A^\sharp \cdot \nabla L f_{\!B} H g_{B} \, + \int_{\R^n} A^\sharp \cdot \nabla H\! f_{\!B}g_{B}.\]
Then, by the Cauchy--Schwarz inequality (writing $\|\centerdot\|_2:=\|\centerdot\|_{L^2(\R^n)}$),
\begin{align*}
\Big| \int_{\R^n} A^\sharp \cdot \nabla f_{\!B}g_{B} \, \Big| \leq \| A^\sharp \|_{\infty} \Big(  \| \nabla L f_{\!B}& \|_{2} \| H g_{B} \|_{2}  + \| \nabla H\! f_{\!B} \|_{2} \| g_{B} \|_{2} \Big).
\end{align*}
Now as $\| A^\sharp \|_{\infty}\leq C\| A\|_{\infty}$ and
\begin{align*}
\| \nabla L f_{\!B} \|_{2} \| H g_{B} \|_{2} & \leq C |\rho| \| L f_{\!B} \|_{2} \| H g_{B} \|_{2} \leq C \| f_{\!B} \|_{2} \| \nabla H g_{B} \|_{2},
\end{align*}
 we find that
\begin{align*}
\Big| \int_{\R^n} A^\sharp \cdot \nabla f_{\!B}g_{B} \, \Big| \leq C \| A \|_{\infty} \Big(\| f_{\!B} \|_{2} &\| \nabla H g_{B} \|_{2}+ \| \nabla H\! f_{\!B} \|_{2} \| g_{B} \|_{2} \Big).
\end{align*}
Since the right-hand side is symmetric in the roles of $f_{\!B}$ and $g_{B}$, we can conclude the same bound for the second term on the right-hand side of~\eqref{anoth}, yielding
\begin{align}\label{another}
\Big| \int_{\R^n} A^\sharp \cdot \nabla (f_{\!B}g_{B}) \, \Big| \leq C \| A \|_{\infty} \Big( \| f_{\!B} \|_{2} &\| \nabla H g_{B} \|_{2}+ \| \nabla H\! f_{\!B} \|_{2} \| g_{B} \|_{2} \Big).
\end{align}
Now clearly we have that
$$\| f_{\!B} \|_{2}\le \|f\|_{L^2(B,e^{\lambda (\theta \cdot {\rm x})^2})}\quad \text{and}\quad \| g_{B} \|_{2}\le e^{\lambda R^2/2}\|g\|_{L^2(B,e^{-\lambda (\theta \cdot {\rm x})^2})}.$$
 On the other hand, by Lemma~2.2 of \cite{zbMATH06145493}, we have
\[ \| \nabla H\! f_{\!B} \|_{2} \leq C \| \widetilde{f} \|_{\dot{X}_{\rho}^{1/2}}\quad \text{and}\quad \| \nabla H g_{B} \|_{2} \leq C \| \widetilde{g} \|_{\dot{X}_{-\rho}^{1/2}}\]
where $(\widetilde{f},\widetilde{g}) \in \dot{X}_{\rho}^{1/2} \times \dot{X}_{-\rho}^{1/2}$ denotes any pair of extensions of $(f,g)$.
Substituting these inequalities into \eqref{another}, and taking the infimum over extensions, yields 
\begin{align*}
\Big| \int_{\R^n} A^\sharp \cdot \nabla (f_{\!B}g_{B}) \, \Big| \leq C  \| A \|_{\infty} \Big(\|f&\|_{L^2(B,e^{\lambda (\theta \cdot {\rm x})^2})}\|g \|_{\dot{X}^{1/2}_{-\rho}(B)}\\
&+ e^{\lambda R^2/2}\|f \|_{\dot{X}^{1/2}_{\rho}(B)}\|g\|_{L^2(B,e^{-\lambda (\theta \cdot {\rm x})^2})} \Big).
\end{align*}
Recalling the weightings in the the norms, this completes the proof of~\eqref{end}.
\end{proof}

From this we can deduce our estimate for ${\rm M}_q:f\mapsto qf$, where $q$ is defined in~\eqref{qog}.

\begin{corollary}\label{cor:M_V}\sl
Consider $\rho \in \C^n$ such that  $\rho\cdot\rho=0$ and $\lambda>1$.  Then there is a $C>1$, depending on $\|\nabla\log \sigma\|_{\infty}$ and the radius $R$ of $B$, such that
\[ \| {\rm M}_qf \|_{Y_{\lambda, \rho, {\rm c}}^{-1/2}({B})} \le C\Big( \frac{1}{\lambda^{1/2}|\rho|}+\frac{1}{\lambda^{1/2}} + \frac{e^{\lambda R^2/2}}{|\rho|^{1/2}}  \Big)\|f \|_{X^{1/2}_{\lambda, \rho} ({B})} \]
whenever $f \in \dot{X}_{\rho}^{1/2}({B})$. 
\end{corollary}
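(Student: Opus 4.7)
The plan is to recognise $\langle {\rm M}_q f, g\rangle$ as the bilinear form $\mathcal B(f,g)$ of Proposition~\ref{prop:boundednessBILINEAR} and then to pass to the norm on the left-hand side of the claim by the duality set up at the end of Section~\ref{spaces}.

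First I would rewrite $q$ in divergence form. Expanding $\nabla(\sigma^{-1/2} fg)$ via the product rule inside the definition~\eqref{qog}, and using the identities $\sigma^{-1/2}\nabla \sigma^{1/2} = \tfrac12 \nabla\log\sigma$ and $\nabla\sigma^{1/2}\cdot\nabla\sigma^{-1/2} = -\tfrac14 |\nabla\log\sigma|^2$, one finds
\[
\langle {\rm M}_q f, g\rangle \;=\; \int_\Omega \tfrac14 |\nabla\log\sigma|^2\, fg \;-\; \tfrac12\int_\Omega \nabla\log\sigma \cdot \nabla(fg).
\]
Setting $a_0 := \tfrac14 |\nabla\log\sigma|^2$ and $A := -\tfrac12 \nabla\log\sigma$ gives coefficients in $L^\infty(\Omega)$ with norms controlled by $\|\nabla\log\sigma\|_{\infty}^2$ and $\|\nabla\log\sigma\|_{\infty}$ respectively, both finite since $\sigma$ is Lipschitz and bounded below, and both supported in $\Omega$. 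The containment $\dot X^{1/2}_\rho(B) \hookrightarrow H^1(B)$ provided by \eqref{equivall} legitimises the pairing for $f,g$ in those spaces, and the right-hand side above is then precisely $\mathcal B(f,g)$ in the notation of Proposition~\ref{prop:boundednessBILINEAR}.

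The second step is pure duality. From the identification at the end of Section~\ref{spaces},
\[
\big(\dot X^{1/2}_{-\rho}(B), \| \centerdot \|_{Y^{1/2}_{\lambda,-\rho}(B)}\big)^{\!*} \;\cong\; \big(L^2(B)+\dot X^{-1/2}_{\rho,{\rm c}}(B), \| \centerdot \|_{Y^{-1/2}_{\lambda,\rho,{\rm c}}(B)}\big),
\]
combined with the density of $C^\infty_{\rm c}(B)$, one has
\[
\|{\rm M}_q f\|_{Y^{-1/2}_{\lambda,\rho,{\rm c}}(B)} \;=\; \sup_{g \ne 0} \frac{|\mathcal B(f,g)|}{\|g\|_{Y^{1/2}_{\lambda,-\rho}(B)}},
\]
the supremum running over $g \in \dot X^{1/2}_{-\rho}(B)$. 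Plugging in the bound from Proposition~\ref{prop:boundednessBILINEAR}, and absorbing $\|\nabla\log\sigma\|_\infty + \|\nabla\log\sigma\|_\infty^2$ into the constant, delivers the stated inequality.

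The only point requiring genuine care is that ${\rm M}_q f$ should be realised as an element of the concrete space $L^2(B)+\dot X^{-1/2}_{\rho,{\rm c}}(B)$, rather than merely as a bounded linear functional on the predual. This can be read off from the proof of Proposition~\ref{prop:boundednessBILINEAR}: the contributions from $a_0$ and from $\nabla\cdot A^\flat$ are paired with $g$ as genuine $L^2(B)$ functions, yielding an $L^2(B)$ piece $f^\flat$, while the $A^\sharp$ contribution is handled on the Fourier side via Plancherel \eqref{planchar} and therefore corresponds to a $\dot X^{-1/2}_{\rho,{\rm c}}(B)$ piece $f^\sharp$. Feeding this explicit decomposition into the definition of $\| \centerdot \|_{Y^{-1/2}_{\lambda,\rho,{\rm c}}(B)}$ and bounding the two pieces with the estimates proved in Proposition~\ref{prop:boundednessBILINEAR} recovers the claimed bound and simultaneously certifies membership in the target space.
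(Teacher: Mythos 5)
Your proposal is correct and takes essentially the same route as the paper: expand $q$ via the product rule to write $\langle {\rm M}_q f, g\rangle=\mathcal B(f,g)$ with $a_0=\tfrac14|\nabla\log\sigma|^2$ and $A=-\tfrac12\nabla\log\sigma$, apply Proposition~\ref{prop:boundednessBILINEAR}, and pass to the $Y^{-1/2}_{\lambda,\rho,{\rm c}}(B)$ norm by the dual identification from Section~\ref{spaces}. Your closing paragraph, on realising ${\rm M}_qf$ concretely as $f^\flat+f^\sharp$, is a sound and careful observation, but it is already guaranteed by the isometric dual identification cited there, so the paper invokes that identification directly without exhibiting the decomposition.
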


\begin{proof} By an application of the product rule, the definition \eqref{qog} can be rewritten as
\begin{align*}
\big\langle q, \psi \big\rangle &= \frac{1}{4} \int_{\Omega} |\nabla \log\sigma|^2  \psi \, - \frac{1}{2} \int_{\Omega} \nabla\log \sigma \cdot \nabla \psi.
\end{align*}
Using our {\it a priori} assumptions that $\sigma$ is bounded below and $\nabla \sigma$ is bounded above almost everywhere (which follows from Lipschitz continuity), $\nabla\log\sigma=\sigma^{-1}\nabla \sigma$ is a vector of bounded functions. Thus, by taking $$a_0=\frac{1}{4}|\nabla\log\sigma|^2\quad  \text{and}\quad  A=-\frac{1}{2}\nabla\log \sigma,$$ we can write $ \langle {\rm M}_q f, g \rangle :=\langle q f, g \rangle:=\langle q , fg \rangle = \mathcal{B} (f, g) $ for all 
$(f,g)\in \dot{X}_{\rho}^{1/2}(B) \times \dot{X}_{-\rho}^{1/2}(B)$. Then, by Proposition~\ref{prop:boundednessBILINEAR} we find that
\[|\langle {\rm M}_q f, g \rangle| \leq C \Big(\frac{1}{\lambda^{1/2}|\rho|}+ \frac{1}{\lambda^{1/2}} + \frac{e^{\lambda R^2/2}}{|\rho|^{1/2}}  \Big) \sum_{j=0}^n \| a_j \|_{\infty} \| f \|_{X^{1/2}_{\lambda, \rho} ({B})} \| g\|_{Y_{\lambda, -\rho}^{1/2} ({B})} \]
for all $(f,g)\in \dot{X}_{\rho}^{1/2}(B) \times \dot{X}_{-\rho}^{1/2}(B)$. Finally, using the identification
$$\Big(\dot{X}^{1/2}_{-\rho} ({B}), \| \centerdot \|_{Y^{1/2}_{\lambda, -\rho} ({B})}\Big)^\ast\cong \Big( L^2({B}) + \dot{X}^{-1/2}_{\rho, {\rm c}}({B}), \| \centerdot \|_{Y^{-1/2}_{\lambda, \rho, {\rm c}} ({B})}\Big),$$
we obtain the desired inequality. 
\end{proof}

\section{Locally defined CGO solutions via Neumann series}\label{contraction}

Let $X^{1/2}_{\lambda, \rho} ({B})$ and $Y^{-1/2}_{\lambda, \rho, {\rm c}} ({B})$ denote the Banach spaces defined in \eqref{X}  and \eqref{term:Y-1/2}, respectively.  Recall that $f\in C^\infty_{\rm c}(B)\mapsto \Delta^{\!-1}_{\rho} f|_{B}$ can be extended as a bounded linear operator
$${\rm T}^B_{\!\rho}: Y^{-1/2}_{\lambda, \rho, {\rm c}} ({B}) \to  X^{1/2}_{\lambda, \rho} ({B}),$$
using Corollary~\ref{cor:boundedness}, and that  ${\rm M}_q: f\mapsto qf$, with $q$ defined in \eqref{qog},  is bounded as
$${\rm M}_q: X^{1/2}_{\lambda, \rho} ({B}) \to  Y^{-1/2}_{\lambda, \rho, {\rm c}} ({B}),$$
by Corollary~\ref{cor:M_V}. The  contraction will follow by  choosing $|\rho|$ and $\lambda$ appropriately so that the product of the operator norms is small.
\begin{theorem}\label{th:remainder0}\sl
Consider $\rho \in \C^n$ such that  $\rho\cdot\rho=0$ and $\lambda>1$. Then
there is a $C_0> 1$, depending on $\|\nabla\log\sigma\|_{\infty}$ and the radius $R$ of $B$, such that
\begin{equation}\label{contra0}
\| {\rm T}^B_{\!\rho} \circ {\rm M}_q \|_{\mathcal{L} \big( X^{1/2}_{\lambda, \rho} ({B}) \big)}   \le 1/2
\end{equation}
whenever $|\rho| > \lambda e^{\lambda R^2}$ and $\lambda= 36C^2_0$.
For all $f \in Y^{-1/2}_{\lambda, \rho, {\rm c}} ({B})$, there is a $w\in X^{1/2}_{\lambda, \rho} ({B})$ such that
\begin{equation}\label{int30} ({\rm I} - {\rm T}^B_{\!\rho} \circ {\rm M}_q) w ={\rm T}^B_{\!\rho}[f]. \end{equation}
Moreover, there is  a $C>1$, depending only on $R$, such that
\begin{align}\label{bounder0}
\| w \|_{X^{1/2}_{\lambda, \rho} ({B})} \le C\|f\|_{Y^{-1/2}_{\lambda, \rho, {\rm c}}(B)}.
\end{align}
\end{theorem}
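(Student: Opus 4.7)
The plan is to compose the continuity estimates of Corollary~\ref{cor:boundedness} and Corollary~\ref{cor:M_V} to obtain the contraction~\eqref{contra0}, and then to solve~\eqref{int30} by a Neumann series. The only real work is bookkeeping: tuning $\lambda$ so that the prefactor in Corollary~\ref{cor:M_V} beats the $\lambda^{1/2}$-loss and the $e^{\lambda R^2/2}$-loss that appear after composing, while ensuring the hypothesis $|\rho|\ge 4\lambda R$ of Corollary~\ref{cor:boundedness} is compatible with the final choice of parameters. All of the substantive analytic input has already been supplied in Sections~\ref{sec:Tzeta}--\ref{third}.

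First I would compose: letting $C_1>1$ (depending only on $R$) be the constant from Corollary~\ref{cor:boundedness} and $C_2>1$ (depending on $\|\nabla\log\sigma\|_\infty$ and $R$) the constant from Corollary~\ref{cor:M_V}, and setting $C_0:=C_1 C_2$, composition yields
$$\|{\rm T}^B_{\!\rho}\circ{\rm M}_q\|_{\mathcal{L}(X^{1/2}_{\lambda,\rho}(B))}\le C_0\Big(\frac{1}{\lambda^{1/2}|\rho|}+\frac{1}{\lambda^{1/2}}+\frac{e^{\lambda R^2/2}}{|\rho|^{1/2}}\Big)$$
whenever $|\rho|\ge 4\lambda R$. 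With the choice $\lambda=36 C_0^2$ we have $C_0/\lambda^{1/2}=1/6$, so the first two summands each contribute at most $1/6$ as soon as $|\rho|\ge 1$. The third summand is bounded by $1/6$ precisely under the hypothesis $|\rho|>\lambda e^{\lambda R^2}$ of the theorem, since this is equivalent to $|\rho|^{1/2}\ge 6C_0 e^{\lambda R^2/2}$. After enlarging $C_0$ once so that $e^{\lambda R^2}\ge 4R$ (harmless because both $C_0$ and the constant in Corollary~\ref{cor:M_V} can only be enlarged), the same hypothesis also forces $|\rho|\ge 4\lambda R$, so Corollary~\ref{cor:boundedness} does apply. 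Summing the three contributions gives~\eqref{contra0}.

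With the contraction in place, ${\rm I}-{\rm T}^B_{\!\rho}\circ{\rm M}_q$ is invertible on $X^{1/2}_{\lambda,\rho}(B)$ via the Neumann series $\sum_{k\ge 0}({\rm T}^B_{\!\rho}\circ{\rm M}_q)^k$, with inverse of operator norm at most $2$. Given $f\in Y^{-1/2}_{\lambda,\rho,{\rm c}}(B)$, a single application of Corollary~\ref{cor:boundedness} gives ${\rm T}^B_{\!\rho}[f]\in X^{1/2}_{\lambda,\rho}(B)$ with $\|{\rm T}^B_{\!\rho}[f]\|_{X^{1/2}_{\lambda,\rho}(B)}\le C_1\|f\|_{Y^{-1/2}_{\lambda,\rho,{\rm c}}(B)}$, and setting
$$w:=({\rm I}-{\rm T}^B_{\!\rho}\circ{\rm M}_q)^{-1}{\rm T}^B_{\!\rho}[f]$$
produces a solution of~\eqref{int30}. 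Combining the two estimates delivers~\eqref{bounder0} with $C=2C_1$, which depends only on $R$ as required. I do not expect any serious obstacle here; the delicate work was in Lemma~\ref{lem:L2B_boundedness} (Carleman with convex weight) and Proposition~\ref{prop:boundednessBILINEAR} (the bilinear bound for ${\rm M}_q$), and all that remains is the parameter count above.
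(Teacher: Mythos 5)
Your argument is correct and follows the paper's proof essentially verbatim: compose Corollaries~\ref{cor:boundedness} and \ref{cor:M_V}, take $\lambda^{1/2}=6C_0$ so that each of the three summands is at most $1/6$ under the stated lower bound on $|\rho|$, invert by Neumann series, and apply Corollary~\ref{cor:boundedness} once more for the final estimate. The only cosmetic difference is your parenthetical about enlarging $C_0$ to force $e^{\lambda R^2}\ge 4R$; this is unnecessary since $\lambda=36C_0^2>36$ already gives $\lambda R^2\ge \log(4R)$ for every $R>0$, so the hypothesis $|\rho|>\lambda e^{\lambda R^2}$ automatically implies $|\rho|\ge 4\lambda R$.
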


\begin{proof}
By combining Corollaries~\ref{cor:boundedness} and \ref{cor:M_V}, we have that ${\rm T}^B_{\!\rho} \circ {\rm M}_q$ is
a bounded operator whenever
$|\rho| \geq 4 \lambda R$ .  Furthermore,
there exists a constant $C_0>1$, such that
\begin{align*}
\| {\rm T}^B_{\!\rho} \circ {\rm M}_q \|_{\mathcal{L} \big( X^{1/2}_{\lambda, \rho} (B) \big)} & \leq C_0 \Big(\frac{1}{\lambda^{1/2}|\rho|}+ \frac{1}{\lambda^{1/2}} + \frac{e^{\lambda R^2/2}}{|\rho|^{1/2}} \Big)   \le \frac 12
\end{align*}
whenever $|\rho|^{1/2} > 6C_0e^{\lambda R^2/2}$ and $ \lambda^{1/2} = 6C_0$. Then, by Neumann series, 
${\rm I} - {\rm T}^B_{\!\rho} \circ {\rm M}_q$ has a bounded inverse;
$$
({\rm I} - {\rm T}^B_{\!\rho} \circ {\rm M}_q)^{-1}=\sum_{k\ge 0} ({\rm T}^B_{\!\rho} \circ {\rm M}_q)^k
$$
on $X^{1/2}_{\lambda, \rho} ({B})$ and so $w = ({\rm I} - {\rm T}^B_{\!\rho} \circ {\rm M}_q)^{-1} {\rm T}^B_{\!\rho}[f]$
satisfies \eqref{int30}. Moreover, 
\begin{align*}\label{before}
\| w \|_{X^{1/2}_{\lambda, \rho} ({B})} &\le \sum_{k\ge 0} \big\|  ({\rm T}^B_{\!\rho} \circ {\rm M}_q)^k{\rm T}^B_{\!\rho} [f] \big\|_{X^{1/2}_{\lambda, \rho} ({B})}
\le
2\|{\rm T}^B_{\!\rho}[f]\|_{X^{1/2}_{\lambda, \rho} ({B})},
\end{align*}
by the triangle inequality,  the contraction \eqref{contra0},  and summing the geometric series.  Then~\eqref{bounder0} follows by a final application of Corollary~\ref{cor:boundedness}.
 \end{proof}
 
 Recall that we can also use the trivial inequality \eqref{id} to extend $f\in C^\infty_{\rm c}(B)\mapsto \Delta^{\!-1}_{\rho} f$ as a bounded linear operator
$$\Delta^{\!-1}_{\rho} : \dot{X}^{-1/2}_{\rho} \to  \dot{X}^{1/2}_{\rho}.$$ 
In the following corollary we clarify that the restriction of this extension to the ball~$B$ and the previous locally defined extension ${\rm T}^B_{\!\rho}$ are  the same. We also record the properties of our CGO solutions that we will need in the remaining sections.

 \begin{corollary}\label{th:remainder} Consider $\rho \in \C^n$ and $\lambda>1$ as in Theorem~\ref{th:remainder0}. 
 Then 
 \begin{equation}\label{contra}
\| \Delta^{\!-1}_{\rho}  \circ {\rm M}_q \|_{\mathcal{L} \big( X^{1/2}_{\lambda, \rho} ({B}) \big)}   \le 1/2,
\end{equation}
there is a $w\in H^1(B)$ that solves \eqref{int}, and there is a $C>1$, depending on $\|\nabla\log\sigma\|_{\infty}$ and the radius $R$ of $B$, such that
\begin{align}\label{bounder}
\| w \|_{\dot{X}^{1/2}_{ \rho} ({B})} \le C\|q\|_{\dot{X}^{-1/2}_{\rho}}.
\end{align}
 Moreover $v=e_{\rho}(1+w)\in H^1(B)$ solves the Lippmann--Schwinger-type equation
 \begin{equation}\label{defin}({\rm I}- {\rm S}_q)v=e_{\rho},\quad \text{where}\quad {\rm S}_q:=e_{\rho}\Delta_{\rho}^{\!-1}\circ {\rm M}_q[e_{-\rho}\,\centerdot\,],\end{equation} as elements of $H^1(B)$, and is also a weak solution to the Schr\"odinger equation \eqref{schrod}.
 \end{corollary}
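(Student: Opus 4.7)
The plan is to deduce the contraction~\eqref{contra} and the existence of~$w$ directly from Theorem~\ref{th:remainder0}, after identifying the global extension $\Delta^{\!-1}_{\rho}|_B$ with the local extension ${\rm T}^B_{\!\rho}$ on an appropriate domain, and then to verify the Lippmann--Schwinger and Schr\"odinger identities by direct algebraic manipulation.

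First I would establish that, for every $f \in \dot{X}^{-1/2}_{\rho, {\rm c}}({B})$, the restriction $\Delta^{\!-1}_{\rho} f|_B$ coincides with ${\rm T}^B_{\!\rho} f$. Approximating $f$ by a sequence $f_n \in C^\infty_{\rm c}({B})$ in $\dot{X}^{-1/2}_\rho$, both extensions agree on the $f_n$ by construction. Since $\Delta^{\!-1}_\rho : \dot{X}^{-1/2}_\rho \to \dot{X}^{1/2}_\rho$ is continuous by~\eqref{id} and hence so is its restriction to $B$ into $\dot{X}^{1/2}_\rho({B})$, and since ${\rm T}^B_{\!\rho}$ is continuous into $X^{1/2}_{\lambda,\rho}({B})$ by Corollary~\ref{cor:boundedness} (whose norm is equivalent to that of $\dot{X}^{1/2}_\rho({B})$ by~\eqref{equiv}), passing to the limit gives the identification. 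Because $L^2({B}) \hookrightarrow \dot{X}^{-1/2}_\rho$ by~\eqref{in:negativeXb} and $C^\infty_{\rm c}({B})$ is dense in $L^2({B})$, one has $L^2({B}) \subset \dot{X}^{-1/2}_{\rho,{\rm c}}({B})$, hence $Y^{-1/2}_{\lambda,\rho,{\rm c}}({B}) \subset \dot{X}^{-1/2}_{\rho,{\rm c}}({B})$ as sets. Together with ${\rm M}_q$ mapping $X^{1/2}_{\lambda,\rho}({B})$ into $Y^{-1/2}_{\lambda,\rho,{\rm c}}({B})$ by Corollary~\ref{cor:M_V}, this promotes \eqref{contra0} to~\eqref{contra}.

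To produce the solution~$w$, I would verify that the constant function~$1$ represents an element of $\dot{X}^{1/2}_\rho({B})$: any smooth cutoff equal to~$1$ on~$B$ and compactly supported in $2B$ lies in $H^1(\R^n)$ and hence in $\dot{X}^{1/2}_\rho$ by the right-hand inequality of~\eqref{equivall}. Reading off the bilinear form from the proof of Corollary~\ref{cor:M_V} with $f = 1$ identifies ${\rm M}_q[1] = q$; in particular, $q \in Y^{-1/2}_{\lambda,\rho,{\rm c}}({B})$. Applying Theorem~\ref{th:remainder0} with $f = q$ produces $w \in X^{1/2}_{\lambda,\rho}({B})$ solving $({\rm I} - {\rm T}^B_{\!\rho}\circ {\rm M}_q)w = {\rm T}^B_{\!\rho}[q]$, which is~\eqref{int} once the two extensions are identified. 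The $H^1(B)$ membership and the bound~\eqref{bounder} then follow by chaining the norm comparisons~\eqref{equivall}--\eqref{equiv}, the estimate~\eqref{bounder0}, and the crude bound $\|q\|_{Y^{-1/2}_{\lambda,\rho,{\rm c}}({B})} \le \lambda^{1/4} e^{\lambda R^2/2}\|q\|_{\dot{X}^{-1/2}_\rho}$ (from the decomposition $f^\flat = 0$, $f^\sharp = q$), absorbing the fixed constant $\lambda = 36C_0^2$ into the final $C$.

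Finally, I would verify the Lippmann--Schwinger identity by direct computation: using $e_{-\rho}v = 1+w$ and~\eqref{int},
\begin{equation*}
{\rm S}_q v = e_\rho\, \Delta^{\!-1}_\rho \circ {\rm M}_q[1+w] = e_\rho\bigl(\Delta^{\!-1}_\rho {\rm M}_q[1] + \Delta^{\!-1}_\rho {\rm M}_q[w]\bigr) = e_\rho w,
\end{equation*}
so that $({\rm I}-{\rm S}_q)v = e_\rho(1+w) - e_\rho w = e_\rho$. For the Schr\"odinger equation, I would invoke the intertwining $\Delta(e_\rho g) = e_\rho \Delta_\rho g$, which relies on $\rho\cdot \rho = 0$: applying $\Delta_\rho$ to~\eqref{int} in the distributional sense yields $\Delta_\rho w = {\rm M}_q[1+w]$, so $\Delta v = e_\rho \Delta_\rho(1+w) = e_\rho\, q(1+w) = qv$ as distributions on~$\Omega$, which is the weak formulation of~\eqref{schrod}. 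I expect the main obstacle to be the careful density argument identifying the two different extensions of $\Delta^{\!-1}_\rho$ on $Y^{-1/2}_{\lambda,\rho,{\rm c}}({B})$, since $X^{1/2}_{\lambda,\rho}({B})$ and $\dot{X}^{1/2}_\rho$ come with different topologies and one must confirm convergence in a norm that simultaneously controls both.
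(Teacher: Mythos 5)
Your proposal is correct and follows essentially the same route as the paper: identify ${\rm T}^B_{\!\rho}$ with $\Delta^{\!-1}_\rho|_B$ by density, feed $f={\rm M}_q[1]=q$ into Theorem~\ref{th:remainder0}, pass back to $H^1(B)$ via the norm equivalences, and multiply by $e_\rho$ to obtain the Lippmann--Schwinger form. The only cosmetic divergence is in the final step: the paper verifies the weak Schr\"odinger equation by an explicit Plancherel computation cancelling $m_\rho$ against $m_{-\rho}(-\,\centerdot\,)$, whereas you apply $\Delta_\rho$ to the integral equation and invoke the intertwining $\Delta(e_\rho\,\centerdot\,)=e_\rho\Delta_\rho$; these rest on the same Fourier-multiplier identity and are interchangeable.
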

 
 \begin{proof} By Corollary~\ref{cor:boundedness}, the equivalence of norms \eqref{equiv}, and the trivial inequality~\eqref{id},
\begin{align*}
\|{\rm T}^B_{\!\rho}g-\Delta^{\!-1}_\rho g\|_{X^{1/2}_{\lambda, \rho} ({B})}&\le \|{\rm T}^B_{\!\rho}[g-g_j]\|_{X^{1/2}_{\lambda,\rho} ({B})}+\|\Delta^{\!-1}_\rho [g_j-g]\|_{X^{1/2}_{\lambda,\rho} ({B})}\\
&\le C\Big( \|g-g_j\|_{Y^{-1/2}_{\lambda,\rho,{\rm c}} ({B})}+\|g_j-g\|_{\dot{X}_{\rho}^{-1/2}}\Big),\end{align*}
and given that the dual norms are also equivalent, by \eqref{equivY}, we can choose  ${g_j\in C_{\rm c}^\infty(B)}$ such that the right-hand side converges to zero. 
Then, combining with Corollary~\ref{cor:M_V}, the contraction \eqref{contra} follows directly from the previous contraction~\eqref{contra0}.

Taking $f={\rm M}_q[1]$ in Theorem~\ref{th:remainder0}, we find  $w\in X^{1/2}_{\lambda, \rho} ({B})$ solving
\begin{equation*}\label{toomuch}
w ={\rm T}^B_{\!\rho}\circ {\rm M}_q[1+w].
\end{equation*}
Again by Corollary~\ref{cor:M_V}, we have  ${\rm M}_q[1+w]\in Y^{-1/2}_{\lambda, \rho, {\rm c}} ({B})$, so that, taking this as the function $g$ above, we can also write
\begin{equation}\label{ret}
w =\Delta^{-1}_{\rho}\circ {\rm M}_q[1+w]
\end{equation}
as elements of $X^{1/2}_{\lambda, \rho} ({B})$. Thus, combining with the norm equivalences \eqref{equivall} and \eqref{equiv}, we find that $w\in H^1({B})$ solves~\eqref{int}. Moreover, the inequality~\eqref{bounder} follows from the previous inequality~\eqref{bounder0}  combined with \eqref{equiv} and the dual version of \eqref{equivY}. 

Finally, writing $v=e_\rho(1+w)$ we can multiply \eqref{ret} by $e_\rho$ to find
$$
v -e_\rho=e_\rho \Delta^{-1}_{\rho} \circ {\rm M}_q[e_{-\rho}v]=:{\rm S}_q[v]
$$
as elements of $H^1({B})$. Then, by integration by parts and Plancherel's identity \eqref{planchar}, cancelling the Fourier multipliers,
\begin{align}\begin{split}
-\int_{\R^n} \nabla {\rm S}_q[v]\cdot\nabla \psi
&=\int_{\R^n} \Delta^{\!-1}_{\rho}\circ {\rm M}_q[e_{-\rho}v] e_{\rho}\Delta[e_{-\rho}e_\rho\psi]\\
&=\int_{\R^n} m_\rho^{-1}\widehat{{\rm M}_q[e_{-\rho}v]} m_\rho (e_\rho\psi)^\vee=\langle qv,\psi\rangle\end{split}\label{slap}
\end{align}
whenever $\psi\in C_{\rm c}^\infty(B)$. Given that  $e_\rho$ is harmonic, we see that $v\in H^1({B})$ is also a weak solution to the Schr\"odinger equation \eqref{schrod}.
\end{proof}

\begin{remark}\label{9.2} The CGO solutions $v=e_\rho(1+w)$ given by Corollary~\ref{th:remainder} also satisfy
\begin{equation}\label{sum}
v=
\sum_{k\ge 0} e_\rho(\Delta^{-1}_{\rho}\circ {\rm M}_q)^k[1],
\end{equation}
with convergence in $H^1(B)$. 
 On the other hand, we have that
\begin{align*}\label{going}
e_\rho(\Delta^{-1}_{\rho}\circ {\rm M}_q)^k[1]
=(e_{\rho}\Delta^{-1}_{\rho}\circ {\rm M}_{q}[e_{-\rho}\,\centerdot\,])^k[e_\rho]={\rm S}_q^k[e_\rho],
\end{align*}
as elements of $H^1(B)$.  Substituting this into \eqref{sum}, we find that
\begin{equation*}\label{ifonly}
v=\sum_{k\ge 0} {\rm S}_q^k[e_\rho]
\end{equation*}
again in the $H^1(B)$-sense. If we had proven that ${\rm S}_q$ is contractive on $H^1(B)$, we could have solved
 \eqref{defin} more directly by Neumann series and the solution would have taken this form. 
\end{remark}

\section{The boundary integral identities}\label{boundary}

Here we use  the divergence theorem to equate the boundary integral to an integral over the domain.  Identities similar to the first identity of Lemma~\ref{ale}, often known as Alessandrini identities,  are foundational for the Calder\'on problem. Recall that our boundary integral $BI_{\Lambda_\sigma}:H^{1/2}(\partial \Omega)\times H^{1}(\Omega) \to \mathbb{C}$ is defined by
\begin{equation}\label{dn22} 
BI_{\Lambda_\sigma}(\phi,\psi):=\int_{\partial\Omega} \Big(\sigma^{-1/2}\Lambda_\sigma[\sigma^{-1/2}\phi]-\nu\cdot \nabla P_0[\phi]\Big) \psi
\end{equation}
where $P_0[\phi]$ denotes the harmonic extension of $\phi$. 
A key idea of Nachman~\cite{zbMATH04105476} and Novikov~\cite{zbMATH04129351} was to take the Faddeev fundamental solution within  boundary integrals similar to this, yielding similar formulas to the second identity in Lemma~\ref{ale}.
 
 \begin{lemma}\label{ale} Let $q$ be defined by \eqref{qog} and let $BI_{\Lambda_\sigma}$ be defined by \eqref{dn22}.  Then
 \begin{equation*}
BI_{\Lambda_\sigma}\big(v|_{\partial\Omega},\psi\big)=\big\langle q v ,\psi\big\rangle
\end{equation*}
whenever $\psi$ is harmonic on $\Omega$ and $v\in H^1(\Omega)$ solves the Schr\"odinger equation \eqref{schrod}. Moreover, 
\begin{equation*}
BI_{\Lambda_\sigma}\big(v|_{\partial\Omega},G_{\!\rho}(x,\,\centerdot\,)\big)={\rm S}_q[v](x)
\end{equation*}
as elements of $H^1(B\setminus \overline{\Omega})$, where ${\rm S}_q$ is defined in \eqref{defin}.
 \end{lemma}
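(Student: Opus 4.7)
The plan is to prove the two identities in order, the second reducing to the first by inserting the Faddeev fundamental solution as test function.

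For the first identity, I set $u := \sigma^{-1/2}v$, so that $u$ weakly satisfies $\nabla\cdot(\sigma\nabla u)=0$ in $\Omega$ by the standard reduction. Since $\sigma^{-1/2}\psi \in H^1(\Omega)$ (as $\sigma^{-1/2}$ is Lipschitz), the divergence identity displayed before \eqref{dn}, applied with boundary datum $u|_{\partial\Omega}$ and test function $\sigma^{-1/2}\psi$, turns the first boundary integral in \eqref{dn22} into $\int_{\Omega} \sigma\nabla u\cdot \nabla(\sigma^{-1/2}\psi)$. For the second boundary integral, integrating by parts once and using that $P_0[v|_{\partial\Omega}]$ is harmonic gives $\int_{\Omega} \nabla\psi\cdot\nabla P_0[v|_{\partial\Omega}]$; since $v-P_0[v|_{\partial\Omega}] \in H^1_0(\Omega)$ and $\psi$ is harmonic, a further integration by parts replaces $P_0[v|_{\partial\Omega}]$ by $v$, yielding $\int_{\Omega} \nabla v\cdot\nabla\psi$. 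Subtracting the two volume integrals and expanding with $\nabla v = \sigma^{1/2}\nabla u + u\nabla\sigma^{1/2}$ and $\nabla(\sigma^{-1/2}\psi) = \sigma^{-1/2}\nabla\psi - \sigma^{-1}\psi\nabla\sigma^{1/2}$, the difference collapses to $-\nabla\sigma^{1/2}\cdot\nabla(u\psi)$. By the definition~\eqref{qog} this integrates to $\langle q,u\psi\rangle$, which equals $\langle qv,\psi\rangle$ since $u\psi = \sigma^{-1/2}v\psi$.

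For the second identity, fix $x \in B\setminus\overline{\Omega}$. The only singularity of $G_{\!\rho}(x,\,\centerdot\,)$ lies at $y=x$, so this function is smooth, hence harmonic, on a neighbourhood of $\overline{\Omega}$. The first identity therefore applies with $\psi = G_{\!\rho}(x,\,\centerdot\,)$, giving $BI_{\Lambda_\sigma}(v|_{\partial\Omega}, G_{\!\rho}(x,\,\centerdot\,)) = \langle qv, G_{\!\rho}(x,\,\centerdot\,)\rangle$. To match the right-hand side with ${\rm S}_q[v](x) = e_\rho(x)(\Delta^{\!-1}_\rho\circ{\rm M}_q[e_{-\rho}v])(x)$, I use the convolution representation \eqref{loveit} of $\Delta^{\!-1}_\rho$ together with the pointwise identity $e_\rho(x)e_{-\rho}(y)F_{\!\rho}(x-y) = G_{\!\rho}(x,y)$. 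For smooth compactly supported inputs this immediately yields ${\rm S}_q[v](x) = \int e_\rho(x)F_{\!\rho}(x-y)(qe_{-\rho}v)(y)\,\dd y = \langle qv, G_{\!\rho}(x,\,\centerdot\,)\rangle$. The general case follows by a density argument, exploiting that $M_q[e_{-\rho}v]$ is supported in $\overline{\Omega}$ while $F_{\!\rho}(x-\,\centerdot\,)$ is smooth on a neighbourhood of $\overline{\Omega}$. The $H^1(B\setminus\overline{\Omega})$ statement is then automatic, since ${\rm S}_q[v] = v - e_\rho \in H^1(B)$ by Corollary~\ref{th:remainder}.

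The main obstacle is this last density argument, because $M_q[e_{-\rho}v]$ lives only in the distributional space $Y^{-1/2}_{\lambda,\rho,{\rm c}}(B)$ rather than in $L^2$ or better. I would approximate $v$ in $X^{1/2}_{\lambda,\rho}(B)$ by smooth inputs for which \eqref{loveit} is pointwise, then use Corollary~\ref{cor:M_V} to control the $M_q$-norms of the approximants and Corollary~\ref{cor:boundedness} for the continuity of the extension ${\rm T}^B_{\!\rho}$, so that the left-hand side of the pointwise identity passes to the limit. On the right-hand side, the pairing $\langle qv, G_{\!\rho}(x,\,\centerdot\,)\rangle$ is continuous in $v\in H^1(B)$ by the product-rule-and-Cauchy--Schwarz bound that defines~\eqref{qog}, uniformly for $x$ in any fixed compact subset of $B\setminus\overline{\Omega}$.
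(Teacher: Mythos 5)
Your proof of the first identity is correct and follows essentially the same route as the paper: pass to $u=\sigma^{-1/2}v$, invoke the divergence identity, trade $P_0[v|_{\partial\Omega}]$ for $v$ using harmonicity of $\psi$, and expand with the product rule so the difference collapses to $-\int_\Omega\nabla\sigma^{1/2}\cdot\nabla(\sigma^{-1/2}v\psi)=\langle qv,\psi\rangle$.

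Your treatment of the second identity has a genuine gap at the step identifying $\langle qv, G_{\!\rho}(x,\,\centerdot\,)\rangle$ with ${\rm S}_q[v](x)$. You write ${\rm S}_q[v](x) = \int e_\rho(x)F_{\!\rho}(x-y)\,(qe_{-\rho}v)(y)\,\dd y$, claiming this holds ``for smooth compactly supported inputs''. But $q$ is a genuine distribution for Lipschitz $\sigma$ --- the paper stresses that $q$ cannot be defined pointwise --- so ${\rm M}_q[e_{-\rho}v]$ is a distribution even when $v\in C^\infty_{\rm c}$. The kernel representation~\eqref{loveit} is stated only for $g\in\mathcal{S}(\R^n)$ and does not apply to it. The obstruction sits in $q$, not in $v$, so your proposed density argument over smooth $v$ never produces a valid base case for the pointwise identity; approximating in $X^{1/2}_{\lambda,\rho}(B)$ and using Corollaries~\ref{cor:M_V} and \ref{cor:boundedness} controls norms, but it does not supply the pointwise formula you would be taking a limit of.

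The paper avoids this by never convolving against a distribution. It tests both sides against a smooth bump $\psi_x$ supported near a point $x\in B\setminus\overline{\Omega}$. After Fubini and dominated convergence, the skew symmetry $G_{\!\rho}(x,y)=G_{-\rho}(y,x)$ together with \eqref{loveit} applied to the honest Schwartz function $e_\rho\psi_x$ gives $\int_B\langle qf, G_{\!\rho}(y,\,\centerdot\,)\rangle\psi_x(y)\,\dd y = \langle qf, e_{-\rho}\Delta_{-\rho}^{-1}[e_\rho\psi_x]\rangle$, and Plancherel's identity~\eqref{planchar} then moves the Fourier multiplier $m_\rho^{-1}$ from the smooth side to the distributional side of the pairing, producing $\int_B{\rm S}_q[f]\,\psi_x$. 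Since ${\rm S}_q[f]\in H^1(B)$ by Corollaries~\ref{cor:boundedness} and \ref{cor:M_V} (not, for general $v$, by the relation ${\rm S}_q[v]=v-e_\rho$, which holds only for the particular CGO solution of Corollary~\ref{th:remainder}), letting $\psi_x\to\delta_x$ and invoking Lebesgue differentiation yields the almost-everywhere identity. This duality/multiplier-transfer step is the ingredient your proposal is missing.
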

 
 \begin{proof}
For the first identity, consider the weak solution to the conductivity equation given by $u=\sigma^{-1/2}v$. Recalling that $\nabla \sigma$ is bounded almost everywhere, by an application of the product rule, we find that
$
\Delta u=-\sigma^{-1}\nabla\sigma \cdot \nabla u\in L^2(\Omega). 
$
Thus the normal traces can be defined so that the divergence theorem can be applied to $\sigma \nabla u \sigma^{-1/2}\psi -\nabla P_0[\phi]\psi$, yielding
\begin{equation}\label{BIu}
BI_{\Lambda_\sigma}\big(v|_{\partial \Omega},\psi\big)=\int_{\Omega}\Big( \sigma \nabla u \cdot \nabla (\sigma^{-1/2}\psi)- \nabla P_0[v|_{\partial \Omega}] \cdot \nabla \psi\Big);
\end{equation} 
see for example \cite[Proposition 2.4]{zbMATH07578602}.
Now  as $\psi$ is harmonic on $\Omega$, we have
\begin{equation*}
\int_{\Omega}    \nabla(P_0[v|_{\partial \Omega}]-\sigma^{1/2}u)\cdot \nabla\psi=0,
\end{equation*}
 which can be substituted in~\eqref{BIu} to find that 
\begin{align*}
BI_{\Lambda_\sigma}\big(v|_{\partial \Omega},\psi\big) 
    &= \int_{\Omega} \Big(\sigma \nabla u \cdot \nabla (\sigma^{-1/2} \psi)   - \nabla (\sigma^{1/2}u) \cdot \nabla \psi\Big).
\end{align*}
Then, after applying the product rule again, terms cancel and one finds that the right-hand side of this identity is equal to $\big\langle q \sigma^{1/2}u ,\psi\big\rangle=\big\langle q v ,\psi\big\rangle$, as desired.

 For the second identity,  recall that $G_{-\rho}:=e_{-\rho}F_{\!-\rho}$ is a fundamental solution for the Laplacian. In particular  $\Delta G_{-\rho}(\,\centerdot\,,x)=0$ on $\Omega$ for all $x\in B\setminus \overline{\Omega}$. On the other hand, $G_{\!\rho}$ inherits a skew symmetry from $F_{\!\rho}$,
\begin{align}\label{sym}
G_{-\rho}(y,x):\!&=e_{-\rho}(y-x)F_{\!-\rho}(y-x)\\
&=e_{\rho}(x-y)F_{\!\rho}(x-y)=:G_{\!\rho}(x,y),\nonumber
\end{align}
so we can reinterprete this as $\Delta G_{\!\rho}(x,\,\centerdot\,)=0$ on $\Omega$ for all $x\in B\setminus \overline{\Omega}$. Thus, we can substitute this into the first identity to find that
\begin{align}\label{ale2}
BI_{\Lambda_\sigma}\big(v|_{\partial\Omega},G_{\!\rho}(x,\,\centerdot\,)\big)=\big\langle qv, G_{\!\rho}(x,\,\centerdot\,)\big\rangle
\end{align}
for all $x\in B\setminus \overline{\Omega}$. 

Now, for any $f\in H^1(\Omega)$ and any smooth $\psi_x$, supported in a small ball centred at $x$ and properly contained in $B\setminus \overline{\Omega}$, we have that
\begin{align}\label{111}
\int_B \big\langle q f,G_{\!\rho}(y,\,\centerdot\,)\big\rangle\,\psi_x(y)\,\dd y&=\big\langle q f, \!\int_B \! G_{\!\rho}(y,\,\centerdot\,)\,\psi_x(y)\,\dd y\big\rangle.
\end{align}
This follows by interchanging the integral and the gradient, using Lebesgue's dominated convergence theorem, and applying Fubini's theorem. Then using  the skew symmetry~\eqref{sym} again, and the kernel representation \eqref{loveit} of $\Delta^{\!-1}_\rho$, the right-hand side of \eqref{111} can be rewritten as
\begin{align}\label{222}
\big\langle  q f, e_{-\rho}\Delta_{-\rho}^{\!-1}[e_{\rho}\psi_x]\big\rangle&=\int_B e_{\rho}\Delta_{\rho}^{\!-1}\big[q fe_{-\rho} \big](y) \,\psi_x(y)\,\dd y.
\end{align}
Here we have considered $\Delta_\rho^{\!-1}$ to be the globally defined extension given by \eqref{id} and the identity follows by moving the Fourier multiplier $m_{\rho}^{-1}$ from one term to the other after an application of Plancherel's identity \eqref{planchar}.
Combining \eqref{111} with~\eqref{222}, and recalling the definition \eqref{defin} of ${\rm S}_q$ we find that
$$
\int_B \big\langle q f,G_{\!\rho}(y,\,\centerdot\,)\big\rangle\,\psi_x(y)\,\dd y=\int_B {\rm S}_q[f](y) \,\psi_x(y)\,\dd y.
$$
Now by the bounds of the previous section, we have that ${\rm S}_q[f]\in H^1(B)$, and so letting~$\psi_x$ approximate the Dirac delta $\delta_x$, we find that
\begin{equation}\label{finalll}
\big\langle q f, G_{\!\rho}(x,\,\centerdot\,)\big\rangle={\rm S}_q[f](x)
\end{equation}
for almost every $x\in B\setminus \overline{\Omega}$ by a suitable version of the Lebesgue differentiation theorem; see for example \cite[Theorem 2.12]{zbMATH06136466}. Taking $f=v$ and combining \eqref{ale2} with \eqref{finalll} yields the second identity.
 \end{proof}
 
\section{The proofs of Theorems~\ref{bound} and \ref{final}}\label{finalsection}

 The second identity of Lemma~\ref{ale} allows us to define $\Gamma_{\!\!\Lambda_\sigma}:H^{1/2}(\partial \Omega)\to H^{1/2}(\partial \Omega)$ by taking the outer trace ${\rm T}_{\partial \Omega}: H^{1}(B\setminus \overline{\Omega})\to H^{1/2}(\partial \Omega)$ of  the boundary integral;
 \begin{equation}\label{newgam}
 \Gamma_{\!\!\Lambda_\sigma}[\phi]:={\rm T}_{\partial \Omega}\big[BI_{\Lambda_\sigma}\big(\phi,G_{\!\rho}({\rm x},\,\centerdot\,)\big)\big]
\end{equation}
for all $\phi\in H^{1/2}(\partial \Omega)$. Moreover it gives us the alternative representation
 \begin{equation}\label{gammalt}
\Gamma_{\!\!\Lambda_\sigma}[\phi]={\rm T}_{\partial \Omega} \circ {\rm S}_q\circ P_q[\phi],
\end{equation}
where ${\rm S}_q$ is defined in \eqref{defin} and $P_q[\phi]$ denotes the solution to \eqref{schrod} with Dirichlet data~$\phi$.

 We restate the main theorems from Section~\ref{recon} before proving them.
 The proof of the second part of the  following theorem bears some resemblance to the argument of \cite[Theorem 3.1]{zbMATH06680012}, allowing us to avoid the use of double layer potentials.

\begin{main-lemma}\sl  Consider $\rho \in \C^n$ such that  $\rho\cdot\rho=0$ with $|\rho|$ as large as in Theorem~\ref{th:remainder0}. Let  $\Gamma_{\!\!\Lambda_\sigma}$ be defined in \eqref{newgam}.
 Then 
\vspace{0.3em}\begin{itemize}
\item [{(i)}] $\Gamma_{\!\!\Lambda_\sigma}:H^{1/2}(\partial \Omega)\to H^{1/2}(\partial \Omega)$ is bounded compactly,
\item [{(ii)}] if $\Gamma_{\!\!\Lambda_\sigma}[\phi]=\phi$ then $ \phi=0$,
\item [{(iii)}] $\mathrm{I}-\Gamma_{\!\!\Lambda_\sigma}$ has a bounded inverse on $H^{1/2}(\partial \Omega)$,
\end{itemize}
\vspace{0.2em}
and if $v\in H^1(B)$ solves the Lippmann--Schwinger-type equation \eqref{defin},  then
\vspace{0.2em}\begin{itemize}
\item [{(iv)}]  
 $v|_{\partial \Omega}= (\mathrm{I}-\Gamma_{\!\!\Lambda_\sigma})^{-1}[e_{\rho}|_{\partial \Omega}].
$
\end{itemize}
\end{main-lemma}

\begin{proof}  By hypothesis $({\rm I}- {\rm S}_q)v=e_{\rho}$, so part {\sl(iv)} follows from the alternative representation~\eqref{gammalt} and part {\sl(iii)}, which in turn will follow from parts {\sl(i)} and~{\sl(ii)} by the Fredholm alternative.

To see {\sl(i)}, note first that  the trace operator ${\rm T}_{\partial \Omega}: H^{1}(B\setminus \overline{\Omega})\to H^{1/2}(\partial \Omega)$ and solution operator $P_q:H^{1/2}(\partial \Omega) \to H^{1}(\Omega)$ are  bounded. Combining this with the alternative representation \eqref{gammalt},  it will suffice to show that ${\rm S}_q:H^1(\Omega)\to H^{1}(B\setminus\overline{\Omega})$ is bounded compactly. 
For this we  recall that on $B\setminus \overline{\Omega}$ we have the representation~\eqref{finalll}, and so by applications of the product rule we can divide the operator into three parts ${\rm S}_q={\rm S}_1+{\rm S}_2+{\rm S}_3$, where
$$
{\rm S}_1[f]:=\frac{1}{4} \int_{\Omega} |\nabla \log \sigma(y)|^2 f(y)G_{\!\rho}(\,\centerdot\,-y)\,\dd y,
$$
$$
 {\rm S}_2[f]:=-\frac{1}{2} \int_{\Omega} \nabla \log \sigma(y) \cdot \nabla f(y)G_{\!\rho}(\,\centerdot\,-y)\, \dd y,
 $$
 and
 $$
  {\rm S}_3[f]:=\frac{1}{2} \int_{\Omega} \nabla \log \sigma(y) \cdot \nabla G_{\!\rho}(\,\centerdot\,-y) f(y)\,\dd y.
$$
By our {\it a priori} assumptions $\nabla \log \sigma=\sigma^{-1}\nabla \sigma\in L^\infty(\Omega)^n$ and on the other hand  $G_{\!\rho}$ and $\nabla G_{\!\rho}$ are locally integrable by \eqref{bigpot}. Thus by Young's convolution inequality, $${\rm S}_1: L^2(\Omega)\to L^2(B\setminus\overline{\Omega}),\quad {\rm S}_2 : H^{1}(\Omega) \to L^2(B\setminus\overline{\Omega}),\quad  \text{and}\quad {\rm S}_3 : L^2(\Omega)\to L^2(B\setminus\overline{\Omega})$$ are bounded.  Moreover, by Lebesgue's dominated convergence theorem, we can take derivatives under the integral, and by \eqref{bigpot} we have that
$$
\partial_{{\rm x}_j} \partial_{{\rm x}_i}G_{\!\rho}({\rm x}-{\rm y})=  c_nn\frac{({\rm x}_j-{\rm y}_j)({\rm x}_i-{\rm y}_i)}{|{\rm x}-{\rm y}|^{n+2}} + \partial_{{\rm x}_j} \partial_{{\rm x}_i}H_{\!\rho}({\rm x} - {\rm y}).
$$
On the one hand,  the second order Riesz transforms are easily bounded in $L^2$ noting that the Fourier multipliers $-{\rm \xi}_j{\rm \xi}_i/|{\rm \xi}|^2$ are uniformly bounded; see for example \cite[Section 7.2]{zbMATH06136466}. On the other hand, the operator corresponding to the second term can be bounded in $L^2(B\setminus\overline{\Omega})$ by Young's inequality again. Together we find that
 $${\rm S}_1: L^2(\Omega)\to H^2(B\setminus\overline{\Omega}), \quad  {\rm S}_2 : H^{1}(\Omega) \to H^2(B\setminus\overline{\Omega})\quad \text{and}\quad {\rm S}_3 : L^2(\Omega)\to H^{1}(B\setminus\overline{\Omega})$$ are bounded. Thus, by Rellich's theorem, all three operators are bounded from $H^{1}(\Omega) \to H^{1}(B\setminus\overline{\Omega})$ compactly. Altogether we find that ${\rm S}_q:H^{1}(\Omega) \to H^{1}(B\setminus\overline{\Omega})$ compactly, which completes the proof of {\sl(i)}.

In order to see {\sl(ii)}, we combine its hypothesis with the alternative representation~\eqref{gammalt}, obtaining
\begin{align} \label{trace}
\phi =\Gamma_{\!\!\Lambda_\sigma}[\phi] ={\rm T}_{\partial \Omega} \circ {\rm S}_q\circ P_q[\phi].
\end{align}
By the bounds of Section~\ref{contraction}, we know that ${\rm S}_q\circ P_q[\phi]=e_{\rho}\Delta^{\!-1}_{\rho}\circ M_{q}[e_{-\rho} P_q[\phi]] \in H^1(B),$ so we can replace the outer trace on $\partial\Omega$ with the inner trace as they both extend the restriction to~$\partial\Omega$ of smooth functions, which are dense in $H^1(B)$.
On the other hand, combining the calculation \eqref{slap} with the defining property of $P_q[\phi]$, we have that
\begin{align*}
-\int_{\R^n} \nabla ({\rm S}_q\circ P_q[\phi])\cdot\nabla \psi
&=\langle qP_q[\phi],\psi\rangle=-\int_{\R^n} \nabla P_q[\phi]\cdot\nabla \psi
\end{align*}
whenever $\psi\in C_{\rm c}^\infty(\Omega)$, and so 
$\Delta[{\rm S}_q\circ P_q[\phi]-P_q[\phi]]=0$ in $\Omega$
in the weak sense. Combining this with our hypothesis \eqref{trace} and  the uniqueness of solutions for the Dirichlet problem with zero boundary data, we find that 
\begin{align}\label{eq}
{\rm S}_q\circ P_q[\phi] = P_q[\phi] {\quad}  \text{in}\ \Omega.
\end{align}
If we had a contraction for ${\rm S}_q$, it would be easier to conclude that $\phi=0$. In any case, we can use the contraction we have by considering \begin{align*}
\eta&:= e_{-\rho}{\rm S}_q \circ P_q[\phi]=\Delta^{\!-1}_{\rho}\circ {\rm M}_q \big[e_{-\rho} P_q[\phi]\big]=\Delta^{\!-1}_{\rho} \circ {\rm M}_q[\eta],
\end{align*}
 where the final identity follows from the definition of $\eta$ and \eqref{eq}. 
Then our contraction \eqref{contra} implies that  $\eta$ must be the zero element of $X^{1/2}_{\lambda, \rho} ({B})$, so by the equivalence of the norms $e_{\rho}\eta$ must be the zero element of $H^1(B)$.  Then by the definition of $\eta$ and \eqref{eq} again, $P_q[\phi]$ is the zero element of $H^1(\Omega)$. Finally, by uniqueness of the Dirichlet problem,  $\phi$ is the zero element of $H^{1/2}(\partial\Omega)$, which completes the proof of the injectivity.
\end{proof}

 \begin{remark}\label{street} Much of the previous argument is insensitive to the choice of fundamental solutions used to invert $\Delta$ and $\Delta_\rho$.
Rather than troubling ourselves to invert $\Delta_\rho$ using the Faddeev fundamental solution, we could have more easily inverted the operator using the {\it a priori} estimates proved in the uniqueness result of~\cite{zbMATH06534426}. 
Indeed, 
we were able to use those estimates to find a different fundamental solution~$K_\rho$ and~$w$ so that
\[w ({\rm x})-\big\langle qw, K_\rho ({\rm x},\,\centerdot\,)\big\rangle = \big\langle q, K_\rho ({\rm x},\,\centerdot\,)\big\rangle {\quad}  \text{in}\ B\setminus\overline{\Omega}.\]
  The associated CGO solutions $v=e_{\rho}(1+w)$ satisfy 
\begin{equation*}\label{lip} v({\rm x})-\big\langle q v, L_\rho({\rm x},\centerdot\,)\big\rangle=e_{\rho}({\rm x}){\quad}  \text{in}\ B\setminus\overline{\Omega},
\end{equation*}
where $L_\rho({\rm x},{\rm y}):=e_{\rho}({\rm x}-{\rm y})K_\rho ({\rm x},{\rm y})$ as before.
However, not only are these fundamental solutions less explicitly defined, they also 
fail to satisfy the skew symmetry law \eqref{sym}, that $K_{-\rho} ({\rm x},{\rm y})=K_\rho ({\rm y},{\rm x})$. Thus, even though we know that $L_\rho(\,\centerdot\,,y)$ is harmonic on $\mathbb{R}^n\setminus \{y\}$, one is unable to conclude that $L_\rho(x,\centerdot\,)$ is harmonic on $\mathbb{R}^n\setminus \{x\}$,  which is what allowed us to take it in the boundary integral identity. We attempted to modify the fundamental solution so that
the skew symmetry law is satisfied as in~\cite{zbMATH05690567}, however we were unable to do this while maintaining the contraction.
\end{remark}

We are now ready to complete the formula for the Fourier transform $
\widehat{q}({\rm \xi}):=\langle q, e^{-i {\rm \xi}\cdot {\rm x}}\rangle
$ with $q$ defined in \eqref{qog}. The proof  makes use of the boundary integral identity again combined with the averaging argument due to Haberman and Tataru~\cite{zbMATH06145493}.

\begin{main-theorem} \sl Let $ \Pi$ be a two-dimensional linear subspace orthogonal to $ \xi \in \R^n$ and define
$$
S^1:= \Pi \cap \big\{\,\theta \in \R^n : |\theta| = 1\, \big\}.$$
 For  $ \theta \in S^1$,
 let $ \vartheta \in S^1$ be such that $ \{\theta,\vartheta \} $ is an orthonormal basis of~$ \Pi $ and define
 \begin{align*}
\rho := \tau \theta + i\Big(-\frac{\xi}{2} + \Big( \tau^2 - \frac{|\xi|^2}{4} \Big)^{1/2} \vartheta \Big),\quad \rho' := -\tau \theta + i\Big(-\frac{\xi}{2} - \Big( \tau^2 - \frac{|\xi|^2}{4} \Big)^{1/2} \vartheta \Big),
\end{align*}
where $\tau>1$. Let $BI_{\Lambda_\sigma}$ and $\Gamma_{\!\!\Lambda_\sigma}$ be defined in \eqref{dn22} and \eqref{newgam}, respectively. Then
\begin{align*}
\widehat{q}(\xi)=
 \lim_{T\to \infty}\frac{1}{2\pi T} \int_{T}^{2T}\!\! \int_{S^1} BI_{\Lambda_\sigma}\Big((\mathrm{I}-\Gamma_{\!\!\Lambda_\sigma})^{-1}[e_{\rho}|_{\partial \Omega}], e_{\rho'}\Big)   \,  \dd \theta \dd \tau.
\end{align*}
\end{main-theorem}

\begin{proof} 
Noting that $\rho\cdot\rho=\rho'\cdot\rho'=0$, we can take the CGO solution $v=e_{\rho}(1+w)\in H^1(B)$ given by Corollary~\ref{th:remainder} and $\psi=e_{\rho'}$ in the first boundary integral identity of Lemma~\ref{ale}. Noting also that $\rho+\rho'=-i\xi$,  the right-hand side of the identity can be written as $\widehat{q}(\xi)$ plus a remainder term. Indeed we find that
\begin{equation}\label{or}
BI_{\Lambda_\sigma}\big(v|_{\partial \Omega},e_{\rho'}\big)=\widehat{q}(\xi)+\big\langle qw,e^{-i \xi\cdot {\rm x}}\big\rangle.
\end{equation}
Now, for any extension $\widetilde{w}\in \dot{X}^{1/2}_\rho$ of $w$, and smooth $\chi_B$,  equal to one on $\Omega$ and supported on $B$, by duality we have that
\begin{align*}
\big|\big\langle qw,e^{-i \xi\cdot {\rm x}}\big\rangle\big|& \le \|q\|_{\dot{X}_{\rho}^{-1/2}}\|\chi_B e^{-i \xi\cdot {\rm x}}\widetilde{w}\|_{\dot{X}_{\rho}^{1/2}}\nonumber\\
&\le C\|q\|_{\dot{X}_{\rho}^{-1/2}}\|\widetilde{w}\|_{\dot{X}_{\rho}^{1/2}}\nonumber
\end{align*}
where the constant $C>1$ depends on $|\xi|$ and $R$; see \cite[Lemma 2.2]{zbMATH06145493} or \cite[(3.17)]{zbMATH06117512}. Taking the infimum over  extensions we find 
\begin{align*}
\big|\big\langle qw,e^{-i \xi\cdot {\rm x}}\big\rangle\big|\le C\|q\|_{\dot{X}_{\rho}^{-1/2}}\|w\|_{\dot{X}_{\rho}^{1/2}(B)}.\nonumber
\end{align*}
Then using the estimate \eqref{bounder} for the remainder in Corollary~\ref{th:remainder}, and taking an average over~$\rho$, we find that
$$
\frac{1}{2\pi T}  \int_{T}^{2T}\!\! \int_{S^1}\big|\big\langle qw,e^{-i \xi\cdot {\rm x}}\big\rangle\big|\,  \dd \theta \dd \tau\le \frac{C}{2\pi T} \int_{T}^{2T}\!\! \int_{S^1}\|q\|^2_{\dot{X}_{\rho}^{-1/2}}\,  \dd \theta \dd \tau,
$$
where $C>1$ depends on $|\xi|$, the radius $R$, and $\|\nabla\log\sigma\|_{\infty}$.
Now, Haberman and Tataru \cite[Lemma 3.1]{zbMATH06145493} proved that the right-hand side converges to zero as $T\to\infty$.  Combining with \eqref{or}, noting that $\widehat{q}(\xi)$ is unchanged by the average, yields 
$$
\widehat{q}(\xi)= \lim_{T\to \infty}\frac{1}{2\pi T} \int_{T}^{2T}\!\! \int_{S^1} BI_{\Lambda_\sigma}\big(v|_{\partial \Omega},e_{\rho'}\big)\,  \dd \theta \dd \tau.
$$
Finally, we can use our formula for the values of $v$ on the boundary given by Theorem~\ref{bound}, which completes the proof.
\end{proof}

\begin{remark}\label{dif}
In \cite{zbMATH06490961, zbMATH07395052, zbMATH07373390}, the contraction was found after taking similar averages over~$\rho$, which yields the existence of a sequence of CGO solutions $$\big\{v_j=e_{\rho_j}(1+w_j) \big\}_{j\ge1}\quad \text{with}\quad |\rho_j|\to \infty\quad \text{as}\quad j\to \infty.$$ The authors of \cite{zbMATH06490961, zbMATH07395052, zbMATH07373390} were able to take advantage of the existence of these solutions to prove uniqueness, however in order to reconstruct in terms of these solutions, one would need to know which values of $\rho_j\in\mathbb{C}^n$ to take. 
\end{remark}

\section{Reconstruction in practise}\label{pract}

There is an extensive literature dedicated to the real-world practicalities of the Calder\'on problem, such as stability, partial data and numerical implementation; see for example \cite{zbMATH06517077, zbMATH06033828,
zbMATH05180743}. Here we suggest some simplifications that would make things easier to measure and calculate without dwelling on how much the simplifications would corrupt the image.

\subsection{What to measure:} An approximation of the conductivity on the surface $\sigma|_{\partial\Omega}$ could be measured directly by placing real potential differences over pairs of adjacent electrodes, measuring the induced current, and applying Ohm's law. 
Earlier reconstruction algorithms also required the perpendicular gradient of the conductivity on the surface, which seems harder to measure directly.   We would also need to measure an approximation of 
$$
{\rm Meas}_T(\xi):=\frac{1}{2\pi T}\int_{T}^{2T}\!\! \int_{S^1} \int_{\partial\Omega}\Lambda_\sigma[\sigma^{-1/2}e_{\rho}]\sigma^{-1/2}e_{\rho'}  \,\dd \theta \dd \tau  
$$ 
for all $\xi\in R^{-1}\mathbb{Z}^n\cap [-cT,cT]^n$, where $cT>1$ and $R$ is approximately twice the diameter of $\Omega$. For the complex integrand one can place two separate real electric potentials.
Given sufficient access to a large enough part of the surface, one would hope to approximate the inner integral with some accuracy, however applying the oscillating electric potentials could prove to be the more difficult technical challenge. 
The outer averaged integrals seem less important and a more rudimentary finite sum approximation could be sufficient. 

\subsection{What to calculate:} Given ${\rm Meas}_T$ and  $\sigma|_{\partial\Omega}$,  one could then employ a triangular finite element method to calculate an approximate solution to \begin{equation*}\label{cond2}
	\left\{
		\begin{aligned}
		\Delta v &=(\Re q_T) v&  \text{in}&\ \Omega , \\
		v&= \sigma|^{1/2}_{\partial\Omega}&  \text{on}&\ \partial\Omega ,
		\end{aligned}
	\right.
\end{equation*}
where, letting $\mathbf{1}_{\Omega}$ denote the characteristic function of the domain,  $q_T$ is defined by
$$
q_{T}({\rm{x}}):=\frac{1}{(2\pi R)^n}\sum_{\xi\in R^{-1}\mathbb{Z}^n\cap [-cT,cT]^n} e^{i{\rm x}\cdot\xi} \Big({\rm Meas}_T(\xi)+\tfrac{|\xi|^2}{2}\widehat{\mathbf{1}_{\Omega}}(\xi)\Big).
$$
Then the grayscale image is given by $v^2$, taking $T$ as large as is practicable. 

\subsection{Justification of the simplifications: } A loose interpretation of Theorem~\ref{bound} is that $v|_{\partial\Omega}$ is not so different from $e_\rho|_{\partial\Omega}$. Indeed if the conductivity were constant, then~$\Gamma_{\!\!\Lambda_\sigma}$ would be identically zero and so part of the reconstruction integral from Theorem~\ref{final} could be rewritten using the divergence theorem;
\begin{equation*}\label{bit}
\int_{\partial\Omega} \partial_{\nu} P_0[e_{\rho}] e_{\rho'} =\int_{\Omega} \nabla e_{\rho} \cdot \nabla e_{\rho'}=\rho\cdot\rho'\int_{\Omega} e^{-i\xi\cdot{\rm x}}=-\tfrac{|\xi|^2}{2}\widehat{\mathbf{1}_{\Omega}}(\xi).
\end{equation*}
Note also that, by the uncertainty principle, $\widehat{q}$ and $\widehat{\mathbf{1}_\Omega}$ are essentially constant at scale~$R^{-1}$. Thus the reconstruction formula approximately simplifies to 
$
\widehat{q}\approx \lim_{T\to \infty}\widehat{q}_{T}$ pointwise.
Note that the cutoff of the frequencies  serves to mollify so that $q_T$ is a function even though it approximately converges to $q$ in the distributional sense. Finally, one observes that $\sigma^{1/2}$ is the unique solution to the Schr\"odinger equation with $v|_{\partial\Omega}=\sigma|^{1/2}_{\partial\Omega}$. 


\end{document}